\documentclass{article}
\usepackage[latin1]{inputenc}
\usepackage{amsfonts}
\usepackage{amsthm}
\usepackage{amsmath}\interdisplaylinepenalty=2500
\usepackage{amssymb}
\usepackage{nccmath}
\usepackage[english]{babel}
\usepackage{hyperref}
\selectlanguage{english} 
\usepackage[all]{xy}
\usepackage{cite}
\usepackage{url}

\newcommand{\U}{{\mathcal U}}
\renewcommand{\r}{\rightarrow}

\newtheorem{lemma}{Lemma}
\newtheorem{example}[lemma]{Example}
\newtheorem{definition}[lemma]{Definition}

\newtheorem{theorem}{Theorem}
\newtheorem{remark}[lemma]{Remark}
\newtheorem{notation}[lemma]{Notation}

\newcommand{\Ctx}{\mathrm{Ctx}}

\newcommand{\Ty}{\mathrm{Ty}}
\newcommand{\Tm}{\mathrm{Tm}}
\newcommand{\Sub}{\mathrm{Sub}}
\newcommand{\Hom}{\mathrm{Hom}}
\newcommand{\id}{\mathrm{id}}

\newcommand{\El}{\mathrm{El}}
\newcommand{\app}{\mathrm{app}}

\newcommand{\Alg}{\mathrm{Alg}}

\renewcommand{\_}{\rule{.6em}{.5pt}\hspace{0.023cm}}
\DeclareMathSymbol{:}{\mathbin}{operators}{"3A}

\newcommand{\C}{{\mathcal C}}
\newcommand{\D}{{\mathcal D}}
\newcommand{\E}{{\mathcal E}}
\newcommand{\I}{{\mathcal I}}
\newcommand{\colim}{\mathrm{colim}}
\renewcommand{\lim}{\mathrm{lim}}
\newcommand{\fin}{\mathrm{fin}}
\newcommand{\Lex}{\mathrm{Lex}}

\newcommand{\Set}{\mathrm{Set}}

\newcommand{\wk}{\mathrm{w}}
\newcommand{\var}{\mathrm{v}}

\renewcommand{\t}{\mathrm{tt}}
\newcommand{\eqd}{\smash{\overset{d}{=}}}
\newcommand{\eqi}{\smash{\overset{i}{=}}}
\newcommand{\eqm}{\smash{\overset{m}{=}}}
\newcommand{\eqf}{\smash{\overset{f}{=}}}

\newcommand{\TT}{\mathrm{TT}}
\newcommand{\pTT}{\mathrm{pTT}}

\newcommand{\emptyctx}{%\text{\textbullet}}
\boldsymbol{\cdot}}

\author{Hugo Moeneclaey\\
Universit\'e de Paris, \\
Inria Paris, CNRS, IRIF, \\
France\\
}

\title{Parametricity and Semi-Cubical Types}

\begin{document}

%\IEEEoverridecommandlockouts
%\IEEEpubid{\makebox[\columnwidth]{978-1-6654-4895-6/21/\$31.00~
%\copyright2021 IEEE \hfill} \hspace{\columnsep}\makebox[\columnwidth]{ }}

\maketitle

\begin{abstract}
We construct a model of type theory enjoying parametricity from an arbitrary one. A type in the new model is a semi-cubical type in the old one, illustrating the correspondence between parametricity and cubes. 

Our construction works not only for parametricity, but also for similar interpretations of type theory and in fact similar interpretations of any generalized algebraic theory. To be precise we consider a functor forgetting unary operations and equations defining them recursively in a generalized algebraic theory.  We show that it has a right adjoint.

We use techniques from locally presentable category theory, as well as from quotient inductive-inductive types.

\end{abstract}

\tableofcontents

\section{Overview}

\subsection{Introduction to type theory}

Martin L{\"o}f type theory \cite{martin1984intuitionistic} is a foundational system for constructive mathematics. Most modern proof assistants (for example Agda, Coq, Lean) are based on variants of this system. 

In this paper we use a semantical approach: we study models of type theory rather than type theory itself. Such a model supplies suitable notions of types (corresponding to sets and properties) and terms inhabiting them (corresponding to elements of sets and proofs of propositions). %, together with a lot of operations on them. 
It can be intuitively conceived as a world where (constructive) mathematics can take place. We give two basic models:
\begin{itemize}
\item The set model is the traditional mathematical world.
\item The initial model is inhabited by the elements definable from the syntax of type theory. Its elements and identifications are precisely the ones which can be derived from the axioms of type theory.
\end{itemize}
Proof assistants implement the initial model. This makes possible a rich interaction between models of type theory and proof assistants, mainly using two principles: %in two directions:
\begin{itemize}
\item On the one hand a given model has features absent from the initial one. They can suggest extensions for a proof assistant.
\item On the other hand a term in the initial model can be interpreted in any model. So a formal proof gives a multitude of theorems, one for each model.
\end{itemize}
The abundance of models for type theory makes this approach fruitful. We give an example for each principle.

Perhaps the most striking use of the first principle comes from the Kan cubical set model given in \cite{bezem2014model}. This model was used to design cubical type theory~\cite{cohen2015cubical}, which is now implemented as Cubical Agda \cite{vezzosi2019cubical}. This proof assistant based on Agda supports (among other features) the axiom of univalence, implying that isomorphic types are equal. This axiom has applications in computer science, where it allows to transport programs along isomorphisms, and in higher mathematics (that is mathematics with everything considered up to homotopy).

Now we give an example for the second principle. Schreier theory classifies group extensions. It can be proven in type theory with univalence using an alternative definition of groups as pointed connected types (a sketch is given by David Myers at \url{http://davidjaz.com/Talks/DJM_HoTT2020.pdf}). But type theory with univalence can be interpreted in any higher topos \cite{shulman2019all}, so we get a higher Schreier theory classifying group extensions in higher topoi. But the type-theoretic proof is significantly easier than the usual proof for regular Schreier theory, let alone a higher version! The canonical introduction to such synthetic homotopy theory using univalence is called the HoTT book \cite{hottbook}.

\subsection{Parametricity and cubical structure}

Now we introduce a fundamental tool of type theory called parametricity. Originally designed for system $F$ \cite{reynolds1983types}, a generalization to type theory can be found in \cite{bernardy2010parametricity}. From our semantical point of view, parametricity consists of some operations $\_^*$ defined recursively in the initial model. Using these operations we can extract \emph{Theorems for free!} \cite{wadler1989theorems} for polymorphic terms in the initial model (that is terms taking a type as input). For example given any term $t$ in the initial model of the same type as the polymorphic identity, the term $t^*$ proves that $t$ behaves as the polymorphic identity. Parametricity can be summarized by saying that for any term $t$ in the initial model, $t^*$ proves that $t$ preserves relations.

We define a parametric model as a model of type theory together with operations $\_^*$ showing that terms in this model preserve relations. The main goal of this article is to construct a parametric model from an arbitrary one. 

%\subsection{Cubical structures and parametricity}

Now we introduce the seemingly unrelated semi-cubical sets. A semi-cubical set consists of a set of points together with:
\begin{itemize}
\item For any two points, a set of paths between them.
\item For any four points and four paths drawing a square, a set of surfaces having this square as border.
\item And so on in higher dimensions (cubes, hypercubes...).
\end{itemize}
The geometric intuition should be clear from our vocabulary. A cubical set is a semi-cubical set with degeneracies, meaning constant paths, constant surfaces, etc. 

It is known that there is a strong connection between cubical structures and parametricity. In \cite{atkey2014relationally} a model for non-iterated parametricity (i.e. parametricity where $\_^*$ can only be applied once) by $1$-truncated cubical sets (i.e. reflexive graphs) is given. In \cite{ghani2016proof} a model for the so-called proof-relevant parametricity (where $\_^*$ can be applied twice) is given using $2$-truncated semi-cubical sets. Alternatively \cite{sojakova2018general} gives a general, foundation-agnostic, definition of a model for non-iterated parametricity in system F using reflexive graphs, and \cite{johann2017cubical} extends this to full parametricity using cubical structures.

Moreover it is known that cubical structures arise when trying to internalize parametricity in type theory. For example \cite{bernardy2015presheaf} gives a model for (the unary variant of) parametricity in (the unary variant of) cubical sets, and \cite{cavallo2020internal} shows how parametricity can be internalized orthogonally to univalence, using very similar cubical techniques for both features.

In this paper we will show how semi-cubical structures arise already from iterated parametricity without internalization, and can be used to construct a parametric model from any given model.

\subsection{Content of this paper}

%We get back to the problem of constructing parametric models from arbitrary ones. 
We use the language of categories. %y-theoretic point of view. 
There is a functor from parametric models to arbitrary models, which forgets the unary operations $\_^*$. This forgetful functor has a poorly behaved left adjoint freely adding parametricity, sending non-parametric models to trivial ones. In this paper we show this forgetful functor also has a better-behaved right adjoint. We will see that this right adjoint sends a model $\C$ to the model of semi-cubical types in~$\C$. For example it sends the set model to the semi-cubical set model. Moreover we will see that our method for constructing this right adjoint does not use much features of parametricity. From these two facts a picture emerges: from an interpretation of type theory (for example parametricity), we get:
\begin{itemize} 
\item A notion of structure on types (for example semi-cubical types).
\item Models for this interpretation by types with this structure (for example semi-cubical models for parametricity).
\end{itemize}

Recall there is a link between cubical structure and internal parametricity. In fact we pursue the idea that univalence is a variant of parametricity, and that we have a similar link between Kan cubical structure and univalence. This is already evoked in \cite{altenkirch2018towards} which introduces a syntax for a univalent type theory inspired by parametricity but does not prove univalence, and in \cite{tabareau2018equivalences} which unifies parametricity and univalence, although assuming a univalent universe to begin with. To summarize we conjecture a table:
\begin{center}
\begin{tabular}{|c|c|c|}
\hline
Interpretation & Structure \\
\hline
External parametricity & Semi-cubical types\\
Internal parametricity & Cubical types\\
External univalence & Kan semi-cubical types\\
Internal univalence & Kan cubical types\\
\hline
\end{tabular}
\end{center}

In this paper we give the procedure supposedly linking the two columns, and study the first line in detail. It is organized as follows:
\begin{itemize}
\item In Section \ref{sectionDefinitionParametricity} we define parametricity for models of type theory. Then we give a general definition of an interpretation, give toy examples, and prove that parametricity is an interpretation. It should be noted that we use unary parametricity (which can also be seen as a form of realizability as first noted in \cite{bernardy2011realizability}) for notational convenience, so we obtain something a bit different from semi-cubes. But our approach can be extended to binary parametricity and semi-cubes straightforwardly.
%and provide our main example of interpretation: parametricity.
%We define models of type theory enjoying external parametricity in Section \ref{}. We call them parametric models from now on. It should be noted that we use unary parametricity (sometimes called realizability), so that what we obtain is a bit different from cubes. But we believe our approach can be extended to binary parametricity and cubes straightforwardly.
\item In Section \ref{generalCategories} we give general conditions implying the existence of right adjoints to forgetful functors for extensions of generalized algebraic theories. To do this we use locally presentable categories, which generalize categories of models for an algebraic theory. The standard textbook is \cite{adamek1994locally}. This short section contains only well-known material, but we include it anyway because the intended audience for this paper might not be familiar with it.
%We give considerations from category theory, more precisely about existence of right adjoint between locally finitely presentable category in Section \ref{}. This section contains only well-known material, but we include it anyway because the intended audience for this paper might not be familiar with it.
\item In Section \ref{constructingRightAdjoint} we prove our main result, constructing a right adjoint from any interpretation. We examine this adjoint for our toy examples of  interpretation to help build intuition, and then for parametricity to see it constructs semi-cubical models. %, and explain how it builds semi-cubical models. Moreover we examine a few other toy examples, to help the reader's intuition.%we use Section \ref{2} to prove that the forgetful functor from parametric models to models of type theory has a right adjoint, which is the functor $\Cube$ we were looking for.
\end{itemize}

\begin{remark}
Three very similar syntactic notions can be used as a basis for the general definition of an interpretation. %, three different syntactic definitions for interpretations:
\begin{itemize}
\item Essentially algebraic theories, giving rise to locally presentable categories \cite{adamek1994locally}.
\item Generalized algebraic theories, allowing dependencies \cite{cartmell1986generalised}.
\item Signatures for quotient inductive-inductive types, where there is a built-in strong induction principle for initial objects \cite{kaposi2019constructing}.
\end{itemize}
Technically we define interpretations using generalized algebraic theories, but we freely use the most convenient of these three notions throughout this paper, as they are intuitively equivalent. This should not be interpreted as a claim that these three notions are equivalent in a technical sense, as we do not know any satisfying reference for this, and it is out of scope to prove it here. However our main example (parametricity as an interpretation of type theory) can indeed be defined using any of these three syntactic notions.
\end{remark}

%The main example we have in mind (parametricity as an interpretation of type theory) can be defined using any of the three syntactic notion. % the three syntactic whenever conveninent.

%The second line (without universe and functions) and third line will be sketched (MAYBE ?).

%In Section \ref{} we define CwF obeying external parametricity. In Section \ref{} we give general consideration 

\section{Parametricity as an interpretation}

\label{sectionDefinitionParametricity}

\subsection{Models of type theory}

We define a model of type theory as a CwF (Category with Families \cite{dybjer1995internal}), so they are algebras for a GAT (Generalized Algebraic Theory in Cartmell's sense \cite{cartmell1986generalised}). We follow the presentation in \cite{altenkirch2016type}, which explains how such a GAT can be seen as a signature for a QIIT (Quotient Inductive-Inductive Type). Indeed the induction principle for the initial model of type theory seen as a QIIT will be crucial in our construction.

First we define CwFs. In this definition $\Ctx$ stands for contexts, $\Ty$ for types, $\Sub$ for substitutions and $\Tm$ for terms. We use type-theoretic notations, so a set-minded reader should replace $x:A$ by $x\in A$ and $p\ q$ by $p(q)$. Moreover any variable appearing undeclared is in fact universally quantified. We use Agda notations, meaning we use:
\begin{eqnarray}
(x:A)\r B(x)
\end{eqnarray}
for what is usually denoted $\forall x:A, B(x)$ or $\Pi(x:A).B(x)$. % (or sometimes $\Pi(x:A).B(x)$).

\begin{definition}
A CwF consists of:
\begin{eqnarray}
\Ctx &:& \Set \\
\Ty &:& \Ctx \r \Set \\
\Sub &:& \Ctx \r \Ctx \r \Set \\
\Tm &:& (\Gamma : \Ctx)\r \Ty\ \Gamma \r \Set 
\end{eqnarray}

With constructors for contexts:
\begin{eqnarray}
\emptyctx &:& \Ctx \\
(\_,\_) &:& (\Gamma:\Ctx) \r \Ty\ \Gamma \r \Ctx
\end{eqnarray}

types:
\begin{eqnarray}
\_[\_] &:& \Ty\ \Delta \r \Sub\ \Gamma\ \Delta \r \Ty\ \Gamma
\end{eqnarray}

substitutions:
\begin{eqnarray}
\_\circ\_ &:& \Sub\ \Delta\ \Theta \r \Sub\ \Gamma\ \Delta \r \Sub\ \Gamma\ \Theta \\
\id &:& \Sub\ \Gamma\ \Gamma \\
\epsilon &:& \Sub\ \Gamma\ \emptyctx \\
(\_,\_) &:& (\delta : \Sub\ \Gamma\ \Delta) \r \Tm\ \Gamma\ (A[\delta]) \nonumber \\
 & & \r \Sub\ \Gamma\ (\Delta,A) \\
\pi_1 &:& \Sub\ \Gamma\ (\Delta,A)\r \Sub\ \Gamma\ \Delta
\end{eqnarray}

and terms:
\begin{eqnarray}
\_[\_] &:& \Tm\ \Delta\ A \r (\delta : \Sub\ \Gamma\ \Delta) \nonumber \\
& & \r \Tm\ \Gamma\ (A[\delta]) \\
\pi_2 &:& (\sigma : \Sub\ \Gamma\ (\Delta,A)) \r \Tm\ \Gamma\ (A[\pi_1\ \sigma])
\end{eqnarray}

with the following equations for types:
\begin{eqnarray}
A[\sigma\circ\eta] &=& A[\sigma][\eta] \\
A[\id] &=& A 
\end{eqnarray}

substitutions:
\begin{eqnarray}
(\sigma\circ\nu)\circ\delta &=& \sigma\circ(\nu\circ\delta)\\
\id\circ\sigma &=& \sigma\\
\sigma\circ\id &=& \sigma\\
\sigma &=& \epsilon\\
\pi_1\ (\sigma,t) &=& \sigma\\
(\pi_1\ \sigma,\pi_2\ \sigma) &=& \sigma\\
(\sigma,t)\circ\nu &=& (\sigma\circ\nu,t[\nu])
\end{eqnarray}

and terms:
\begin{eqnarray}
\pi_2\ (\sigma,t) &=& t
\end{eqnarray}

\end{definition}

Note that some equations in the definition need the previous ones to be well-typed. CwFs are worlds where one can substitute, so they are sometimes called models for the calculus of substitutions. But not much can be done in an arbitrary CwF since there is no way to construct types. %So a CwF cannot be considered as a mathematical world. 
We define additional structures on a CwF, which will be assumed in a model of type theory.

Now we introduce some useful notations where $\wk$ stands for weakening and $\var$ for the last variable. %, so that $\var[\wk^n]$ is similar to the de Bruijn index $n$.

\begin{notation}
We define for $\Gamma:\Ctx$ and $A:\Ty\ \Gamma$.
\begin{eqnarray}
\wk = \pi_1\ \id &:& \Sub\ (\Gamma,A)\ \Gamma\\
\var = \pi_2\ \id &:& \Tm\ (\Gamma,A)\ A[\wk]
\end{eqnarray}
\end{notation}

Using this notation we have that $\var[\wk^n]$ is similar to the de Bruijn index $n$,  where $\wk^n$ is $\wk\circ\dots\circ\wk$ where $\wk$ is composed $n$ times.

\begin{definition}
A unit for a CwF consists of:

\begin{eqnarray}
\top &:& \Ty\ \Gamma \\
\t &:& \Tm\ \Gamma\ \top
\end{eqnarray}

such that for all $x:\Tm\ \Gamma\ \top$  we have:

\begin{eqnarray}
x &=& \t
\end{eqnarray}

with equations for substitutions:

\begin{eqnarray}
\top[\sigma] &=& \top\\
\t[\sigma] &=& \t
\end{eqnarray}
\end{definition}

%Now we define products for CwF.

\begin{definition}

Products for a CwF consist of:
\begin{eqnarray}
\Sigma  &:&(A:\Ty\ \Gamma) \r \Ty\ (\Gamma,A)\r \Ty\ \Gamma\\
\_.1 &:& \Tm\ \Gamma\ (\Sigma\ A\ B) \r \Tm\ \Gamma\ A\\
\_.2 &:& (t:\Tm\ \Gamma\ (\Sigma\ A\ B)) \r \Tm\ \Gamma\ B[\id,t.1]\\
(\_,\_) &:& (t:\Tm\ \Gamma\ A) \r \Tm\ \Gamma\ B[\id,t]\nonumber\\
 & & \r \Tm\ \Gamma\ (\Sigma\ A\ B)
\end{eqnarray}

such that we have:
\begin{eqnarray}
(s,t).1 &=& s\\
(s,t).2 &=& t\\
(t.1,t.2) &=& t
\end{eqnarray}

with equations for substitutions:
\begin{eqnarray}
(\Sigma\ A\ B)[\sigma] &=& \Sigma\ A[\sigma]\ B[\sigma\circ\wk,\var]\\
(s,t)[\sigma] &=& (s[\sigma],t[\sigma])
\end{eqnarray}
%Note that $t.1[\sigma]=(t[\sigma]).1$ and $t.2[\sigma]=(t[\sigma]).2$ are consequences from this definition.
\end{definition}

\begin{remark}
We call $\Sigma\ A\ B$ a product because it specializes to the cartesian product $A\times B$ when $B$ does not depend on $A$. Confusingly it is sometimes called a dependent sum.
\end{remark}

\begin{remark}
The notation $(\_,\_)$ is overloaded, as it can be used for contexts, substitutions or terms.
\end{remark}

\begin{definition}
Functions for a CwF consist of:

\begin{eqnarray}
\Pi &:& (A:\Ty\ \Gamma) \r \Ty\ (\Gamma,A)\r \Ty\ \Gamma\\
\app &:& \Tm\ \Gamma\ (\Pi\ A\ B) \r \Tm\ (\Gamma,A)\ B\\
\lambda &:& \Tm\ (\Gamma,A)\ B \r \Tm\ \Gamma\ (\Pi\ A\ B)
\end{eqnarray}

such that we have:
\begin{eqnarray}
\app\ (\lambda\ t) &=& t\\
\lambda\ (\app\ t) &=& t
\end{eqnarray}

with equations for substitutions:
\begin{eqnarray}
(\Pi\ A\ B)[\sigma] &=& \Pi\ A[\sigma]\ B[\sigma\circ\wk,\var]\\
(\lambda\ t)[\sigma] &=& \lambda\ (t[\sigma\circ\wk,\var])
\end{eqnarray}

\end{definition}

\begin{remark}
The previous definitions imply: 
\begin{eqnarray}
(t.1)[\sigma] &=& t[\sigma].1 \\
(t.2)[\sigma] &=& t[\sigma].2 \\
(\app\ t)[\sigma\circ\wk,\var] &=& \app\ (t[\sigma]) 
\end{eqnarray}
So that there are no missing equations for substitutions.% are consequences from this definition.
\end{remark}

%Note that the previous equations imply:
%\[(\app\ t)[\sigma\uparrow A] = (\app\ t[\sigma])\]
%Now we define a universe.

\begin{definition}
A universe for a CwF consists of:

\begin{eqnarray}
\U &:& \Ty\ \Gamma\\
\El &:& \Tm\ \Gamma\ \U \r \Ty\ \Gamma\\
\top_\U &:& \Tm\ \Gamma\ \U\\
\Sigma_\U &:& (s : \Tm\ \Gamma\ \U)\r \Tm\ (\Gamma,\El\ s)\ \U \r \Tm\ \Gamma\ \U\\
\Pi_\U &:& (s : \Tm\ \Gamma\ \U)\r \Tm\ (\Gamma,\El\ s)\ \U \r \Tm\ \Gamma\ \U
\end{eqnarray}

such that we have:
\begin{eqnarray}
\El\ \top_\U &=& \top\\
\El\ (\Sigma_\U\ s\ t) &=& \Sigma\ (\El\ s)\ (\El\ t)\\
\El\ (\Pi_\U\ s\ t) &=& \Pi\ (\El\ s)\ (\El\ t)
\end{eqnarray}

with equations for substitutions:
\begin{eqnarray}
\U[\sigma] &=& \U\\
(\El\ t)[\sigma] &=& \El\ (t[\sigma])\\
\top_\U[\sigma] &=& \top_\U\\
(\Sigma_\U\ s\ t)[\sigma] &=& \Sigma_\U\ s[\sigma]\ t[\sigma\circ\wk,\var] \\
(\Pi_\U\ s\ t)[\sigma] &=& \Pi_\U\ s[\sigma]\ t[\sigma\circ\wk,\var]
\end{eqnarray}

\end{definition}

%\subsection{Model of type theory}

\begin{definition}
A model of type theory is a CwF with a unit, products, functions and a universe.
\end{definition}

There exist many variants of type theory, with fewer or more types. The most common extensions consist in adding some inductive types (for example booleans, natural numbers, identity types, $W$-types...) with sometimes a hierarchy of universes and maybe a scheme for general inductive families. 

Our abstract approach using an interpretation makes it easy to check whether this article works for a given extension. It certainly works for the common ones.

\subsection{Parametric models}

Now we define what it means for a model of type theory to be parametric. We use unary parametricity, which can be seen as a case of realizability. %In the next definition we use variable for convenience, but it should be translated as a generalized agebraic theory.

\begin{definition}

\label{definitionParametricity}

A parametric model is a model of type theory together with:

\begin{eqnarray}
\_^* &:& (\Gamma:\Ctx)\r \Ty\ \Gamma \label{beginOpParam}\\
\_^* &:& (A:\Ty\ \Gamma) \r \Ty\ (\Gamma,\Gamma^*,A[\wk])\\
\_^* &:& (\sigma : \Sub\ \Gamma\ \Delta) \r \Tm\ (\Gamma,\Gamma^*)\ \Delta^*[\sigma\circ\wk]\\
\_^* &:& (t:\Tm\ \Gamma\ A) \r \Tm\ (\Gamma,\Gamma^*)\ A^*[\id,t[\wk]] \label{endOpParam}
\end{eqnarray}

Such that we have equations for substitutions:
\begin{eqnarray}
\emptyctx^* &=& \top \label{beginEqParam}\\
(\Gamma,A)^* &=& \Sigma\ \Gamma^*[\wk]\ A^*[\wk^2,\var,\var[\wk]]\\
(A[\sigma])^* &=& A^*[\sigma\circ\wk^2,\sigma^*[\wk],\var]\\
(\sigma\circ\nu)^* &=& \sigma^*[\nu\circ\wk,\nu^*]\\
\id^* &=& \var\\
\epsilon^* &=& \t\\
(\sigma,t)^* &=& (\sigma^*,t^*)\\
(\pi_1\ \sigma)^* &=& \sigma^*.1\\
(t[\sigma])^* &=& t^*[\sigma\circ\wk,\sigma^*]\\
(\pi_2\ \sigma)^* &=& \sigma^*.2
\end{eqnarray}

for the unit:
\begin{eqnarray}
\top^* &=& \top\\
\t^* &=& \t
\end{eqnarray}

for products:
\begin{eqnarray}
(\Sigma\ A\ B)^* &=& \Sigma\ A^*[\eta_1]\ B^*[\eta_2]\\ 
(t.1)^* &=& t^*.1\\
(t.2)^* &=& t^*.2\\
(s,t)^* &=& (s^*,t^*)
\end{eqnarray}

where:
\begin{eqnarray}
\eta_1 &=& (\wk,\var.1) \\
\eta_2 &=& (\wk^3,\var.1[\wk],(\var[\wk^2],\var),\var.2[\wk])
\end{eqnarray}

for functions:
\begin{eqnarray}
(\Pi\ A\ B)^* &=& \Pi\ A[\sigma_1]\ (\Pi\ A^*[\sigma_2]\ B^*[\sigma_3]) \\
(\app\ t)^* &=& (\app\ (\app\ t^*))[\nu_1] \\
(\lambda\ t)^* &=& \lambda\ (\lambda\ (t^*[\nu_2]))
\end{eqnarray}

where:
\begin{eqnarray}
\sigma_1 &=& \wk^2 \\
\sigma_2 &=& (\wk^2,\var) \\
\sigma_3 &=& (\wk^4,\var[\wk],(\var[\wk^3],\var),(\app\ \var)[\wk]) \\
\nu_1 &=& (\wk^2,\var.1,\var[\wk],\var.2) \\
\nu_2 &=& (\wk^3,\var[\wk],(\var[\wk^2],\var)) 
\end{eqnarray}

and for the universe:
\begin{eqnarray}
\U^* &=& \Pi\ (\El\ \var)\ \U\\
(\El\ t)^* &=& \El\ (\app\ t^*)\\
(\top_\U)^* &=& \lambda\ \top_\U\\
(\Sigma_\U\ s\ t)^* &=& \lambda\ (\Sigma_\U\ (\app\ s^*)[\eta_1]\ (\app\ t^*)[\eta_2])\\
(\Pi_\U\ s\ t)^* &=& \lambda\ (\Pi_\U\ s[\sigma_1]\ (\Pi_\U\ (\app\ s^*)[\sigma_2] \nonumber\\
 & &  \ \ \ (\app\ t^*)[\sigma_3])) \label{endEqParam}
\end{eqnarray}

\end{definition}

The main point of this lengthy definition is that the equations ensure that the operations $\_^*$ are recursively defined in the initial model by the given equations. This will be checked in detail in the Appendix. %\ref{interpretationParametricity}. 
We will capture this feature in the definition of an interpretation.

\begin{remark}
To treat binary parametricity a product of contexts should be added, so that for $\Gamma:\Ctx$ we can define $\Gamma^* : \Ty\ (\Gamma,\Gamma)$. 
\end{remark}

\begin{remark}
A parametric model needs products and a unit in the equations for $\emptyctx^*$ and $(\Gamma,A)^*$. Moreover a parametric model with a universe needs functions in the equation for $\U^*$. % and $(\El\ a)^*$. %Otherwise we can define a parametric CwF with just a unit and products also with a unit, products and functions.
\end{remark}

\subsection{Definition of an interpretation}

In this section we give a definition capturing the features of parametricity making the rest of this paper work. We assume the reader familiar with GAT (Generalized Algebraic Theories \cite{cartmell1986generalised}). 

For $T$ a GAT, we denote by $\Alg_T$ the category of models of $T$ and $\I_T$ its initial model, so $\I_T$ is the initial object in $\Alg_T$. Note that we will only consider finitary GATs.

%essentially algebraic theory (see Definition 3.34 in \cite{adamek1994locally}). It is an algebraic theory extended with partial operations, with domains defined by equations. For $T$ an essentially algebraic theory, we denote by $\Alg_T$ the category of models of $T$ and $\I_T$ its initial model (the initial object in $\Alg_T$).

\begin{definition}
Let $T$ be a finitary GAT. An interpretation for $T$ is a GAT of the form: 
\begin{eqnarray}
T,O,E,E'
 \end{eqnarray}
where:
\begin{itemize}
\item $O$ is a set of unary operations defined recursively in $\I_T$ by equations $E$.
\item $E'$ is a set of unary equations proved inductively in $\I_T$.
\end{itemize}
\end{definition}

\begin{remark}
Here a unary operation is an operation with one main input, and possibly secondary inputs inferred from it. %such that its inputs can be inferred from one element. 
This means that:
\begin{eqnarray}
(x:A)\r (y : B\ x)\r C
\end{eqnarray}
is considered unary, as $x$ can be inferred from $y$. For example the operations (\ref{beginOpParam}) to (\ref{endOpParam}) are considered unary. They take a context, a type, a term or a substitution as their main input. %, as well as other secondary inputs inferred from it. %Their other secondary inputs can be inferred from it.

Unary equations are defined similarly as equations depending on one main variable, with possibly secondary variables inferred from it.
\end{remark}

\begin{remark}
\label{remarkCStar}
We clarify the expression `recursively defined in $\I_T$'. It means that for any constructor $c$ in $T$ and any added unary operation $\_^*$ in $T'$, we have an equation in $T'$ of the form:
\begin{eqnarray}
(c(x_1,\dots,x_n))^* &=& c^*(x_1,x_1^*,\dots,x_n,x_n^*) \label{recursiveEquation}
\end{eqnarray}
where $c^*$ is a term in $T$. 

Moreover it means that for any equation $s=t$ in $T$, we have $s^* = t^*$ in $T$, where $s^*$ and $t^*$ are computed using the recursive equations.

Equation \ref{recursiveEquation} makes sense only when there is precisely one unary operation $\_^*$ by sort (as for parametricity), but the analogous general formula is clear.
\end{remark}

\begin{remark}
\label{definitionInterpretation}
This definition can be reformulated more precisely using the theory of signatures for QIITS as in \cite{kovacs2020signatures}. In this theory, given a signature: 
\begin{eqnarray}
\Gamma\vdash 
\end{eqnarray}
we have a signature for displayed algebras: 
\begin{eqnarray}
\Gamma\vdash\Gamma^D
\end{eqnarray} 
and a signature for sections of such a displayed algebra: 
\begin{eqnarray}
\Gamma,\Gamma^D\vdash\Gamma^S
\end{eqnarray} 
Then an interpretation is an extension of $\Gamma$ of the form:
\begin{eqnarray}
\Gamma,\Gamma^S[\id,t]
\end{eqnarray}
 where we have:
\begin{eqnarray}
\Gamma\vdash t:\Gamma^D
\end{eqnarray}
in the theory of signature.
\end{remark}

Interpretations will be used in Section \ref{constructingRightAdjoint}. In Section \ref{generalCategories} we will use the weaker notion of extensions by operations and equations, without new sorts. Now we give toy examples of interpretations.

\begin{example}
The theory of sets with an endofunction is an interpretation of the theory of sets. This just means the extension of:
\begin{eqnarray}
X:\Set
\end{eqnarray}
by:
\begin{eqnarray}
s:X\r X 
\end{eqnarray}
is an interpretation. We do not need any equation defining $s$ because $X$ is empty in the initial model.
%There is an interpretation of $T$ the theory of set with $T^+$ the theory of sets with endofunction. Indeed it comes from:
%\[X:\Set \subset X:\Set , s:X\r X\]
%indeed $O$ consists of a unary operation $s$, and there is no need to give any equation defining $s$ because the initial set is empty.
\end{example} 

\begin{example}
The theory of groups is an interpretation of the theory of monoids. Indeed it is the extension of the theory of a monoid $(M,e,\cdot)$ where $O$ is:
\begin{eqnarray}
 \_^{-1} &:& M\r M
 \end{eqnarray}
 and $E$ is:
 \begin{eqnarray}
e^{-1} &=& e \label{eqE1}\\ 
(m\cdot n)^{-1} &=& n^{-1}\cdot m^{-1} \label{eqE2} 
\end{eqnarray}
with $E'$ as follows:
\begin{eqnarray}
m\cdot m^{-1} &=& e  \label{eqE'1} \\
m^{-1}\cdot m &=& e \label{eqE'2}
\end{eqnarray}

%\begin{eqnarray}
 %\_^{-1} &:& M\r M  \label{eqO}\\
%e^{-1} &=& e \label{eqE1}\\ 
%(m\cdot n)^{-1} &=& n^{-1}\cdot m^{-1} \label{eqE2} \\
%m\cdot m^{-1} &=& e  \label{eqE'1} \\
%m^{-1}\cdot m &=& e \label{eqE'2}
%\end{eqnarray}

%where $O$ is (\ref{eqO}), $E$ is (\ref{eqE1}), (\ref{eqE2}) and $E'$ is (\ref{eqE'1}), (\ref{eqE'2}).
\end{example}

%\begin{example}
%\label{exampleGraph}
%The theory of reflexive graphs is an interpretation of the theory of graphs. Indeed this is the interpretation of the theory of graphs:
%\begin{eqnarray}
%V,E:\Set , s,t:E\r V
%\end{eqnarray}
%where:
%\begin{eqnarray}
%O &\mathrm{is}& r : V\r E \\
%O' &\mathrm{is}& s\ (r\ v) = v , t\ (r\ v) = v\ \mathrm{for}\ v:V %f^{-1}\cdot f = e
%\end{eqnarray}
%\end{example}

\begin{example}
The theory of reflexive graphs is an interpretation of the theory of graphs. Indeed this is the extension of the theory of graphs:
\begin{eqnarray}
V &:& \Set\\
E &:& V\r V\r \Set
\end{eqnarray}
by:
\begin{eqnarray}
r &:& (v:V)\r E\ v\ v
\end{eqnarray}
\end{example}

Now we are ready to give our main interpretation.

\begin{theorem}
\label{parametricityAsInterpretation}
Parametricity is an interpretation of type theory.
%Parametricity can be seen as an interpretation.
\end{theorem}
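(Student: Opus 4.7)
The plan is to unfold Definition \ref{definitionInterpretation} and check that the extension of type theory by parametric structure (Definition \ref{definitionParametricity}) fits. Concretely, I must exhibit unary operations $O$, defining equations $E$, and additional unary equations $E'$, such that the GAT of parametric models is $T,O,E,E'$ where $T$ is the GAT of models of type theory.

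First, take $O$ to be the four $\_^*$ operations given in (\ref{beginOpParam})--(\ref{endOpParam}), one per sort of type theory. Each is unary in the sense described after Definition \ref{definitionInterpretation}: the main argument is a context, type, substitution, or term, while all other arguments (the ambient context $\Gamma$, the classifying type $A$, the source/target of a substitution) are inferable from it. Since there is exactly one $\_^*$ per sort, the schematic recursive equation (\ref{recursiveEquation}) of Remark \ref{remarkCStar} applies without modification and gives the shape to look for in $E$.

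Next, take $E$ to be (\ref{beginEqParam})--(\ref{endEqParam}) and verify that each is of the shape $c(x_1,\dots,x_n)^* = c^*(x_1,x_1^*,\dots,x_n,x_n^*)$ for the corresponding constructor $c$ of type theory. I would enumerate the constructors: $\emptyctx$ and $(\_,\_)$ for $\Ctx$; $\_[\_]$ for $\Ty$; $\circ,\id,\epsilon,(\_,\_),\pi_1$ for $\Sub$; $\_[\_],\pi_2$ for $\Tm$; and the constructors for $\top$, $\Sigma$, $\Pi$, and the universe $\U$. For each of these, one reads off exactly one defining equation in the list, in which the right-hand side is a term of type theory built only from the inputs and their starred versions (the substitutions $\sigma_i$, $\eta_i$, $\nu_i$ introduced for $\Sigma$ and $\Pi$ are pure substitutions of $T$, not using $\_^*$). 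The check is thus by inspection. Take $E'=\emptyset$, since the definition introduces no further unary equations.

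The substantial remaining obligation, per the second paragraph of Remark \ref{remarkCStar}, is that for every equation $s=t$ of $T$, the equality $s^*=t^*$ is derivable from $E$. This is the main obstacle: a mechanical but lengthy case analysis covering the category and substitution laws of the CwF, the $\beta$/$\eta$ rules for $\Sigma$, $\Pi$, $\top$, and the universe coherences $\El\ \top_\U=\top$ etc. I would delegate the complete verification to the Appendix, as the author does. As a representative case, to check $A[\sigma\circ\eta]^* = A[\sigma][\eta]^*$, one unfolds the left side using the equations for $(A[\sigma])^*$ and $(\sigma\circ\nu)^*$, unfolds the right side by two applications of $(A[\sigma])^*$, and sees both reduce to the same term via associativity of $\circ$ and the substitution equations already available in $T$; the other cases proceed analogously, with the most delicate ones being those involving the universe and $\Pi$, where the auxiliary substitutions $\sigma_i,\nu_i$ must be shuffled carefully.
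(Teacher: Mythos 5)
Your proposal is correct and follows essentially the same route as the paper: identify the $\_^*$ operations as unary, observe that equations (\ref{beginEqParam})--(\ref{endEqParam}) have the recursive shape of Remark \ref{remarkCStar}, and reduce the theorem to checking that $s^*=t^*$ holds for every equation $s=t$ of $\TT$, a lengthy case analysis deferred to the Appendix. Your additional remarks (that $E'$ is empty and that the auxiliary substitutions $\eta_i,\sigma_i,\nu_i$ are pure terms of $\TT$) are accurate and consistent with the paper.
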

\begin{proof}
%In Section 6 of \cite{cartmell1986generalised} it is explained that the category of models for any GAT is the category of model for a corresponding essentially alegbraic theory. This means that the category of (resp. parametric) models of type theory is the category of models for an essentially algebraic theory denoted $\TT$ (resp. $\pTT$). Moreover we see operations $\_^*$ are unary for $\TT$.
We denote by $\TT$ (resp. $\pTT$) the theory of models of type theory (resp. parametric such models). We see that operations $\_^*$ are unary for $\TT$.

In order to prove that $\pTT$ is an interpretation of $\TT$, we need to check that equations (\ref{beginEqParam}) to (\ref{endEqParam}) from Definition \ref{definitionParametricity} indeed define some operations $\_^*$ in $\I_\TT$ the initial model of type theory, as indicated in (\ref{beginOpParam}) to (\ref{endOpParam}). %Note that the types of $\_^*$ are formalized in an essentially algebraic theory by a family of unary equations, for example the context of $\Gamma^*$ is $\Gamma$.%, as in (\ref{beginOpParam}) to (\ref{endOpParam}).

To check this we see the initial model of type theory as a QIIT \cite{altenkirch2016type}. %This reasoning is sound because QIITs can be constructed from inductive families \cite{kaposi2019constructing}, and the set model of type theory supports these. 
Then we just need to check that for any equation $s=t$ in $\TT$, the elements $s^*,t^*$ defined recursively by equations (\ref{beginEqParam}) to (\ref{endEqParam}) are equal in $\I_\TT$. This tedious but straightforward task is done in the Appendix.% \ref{interpretationParametricity}.
%In order to do that we use the correspondence between GAT and essentially algebraic theories, as explained in the Section 6 of \cite{cartmell1986generalised}. 
\end{proof}

\begin{remark}
The reader uninterested in extra generality can replace any interpretation $T'$ of $T$ by the interpretation $\pTT$ of $\TT$ in the rest of this paper. %Most of this article makes sense with any interpretation $T'$ of $T$ replaced by the interpretation $\pTT$ of $\TT$. % without becoming meaningless. 
\end{remark}

We will show that for any interpretation $T'$ of $T$, the forgetful functor: 
\begin{eqnarray}
U : \Alg_{T'}\r \Alg_T \label{forgetfulFunctor}
\end{eqnarray} 
has a right adjoint. 

%equation $E$ by the interpretation $\_^*$ of $\TT$ defined by equations (\ref{beginEqParam}) to (\ref{endEqParam}).

%\subsection{Parametricity for CwF with functions and universes}

%In order to define a CwF with functions and universes obeying external parametricity, we need to add equations defining $\_^*$ on any of the new types, terms, etc. First for functions we add:

%\[(\Pi\ A\ B)^* = \Pi\ A[\wk^2]\ (\Pi\ A^*[\wk^2,\var]\ B^*[\wk^4,\var[\wk],(\var[\wk^3],\var),(\app\ \var)[\wk]])\]
%\[(\app\ t)^* = (\app\ (\app\ t^*))[\wk^2,\var.1,\var[\wk],\var.2]\]
%\[(\lambda\ t)^* = \lambda\ \lambda\ (t^*[\wk^3,\var[\wk],(\var[\wk^2],\var)]\]

%Now for universes we add:
%\[\U^* = \Pi\ (\El\ \var)\ \U\]
%\[(\El\ a)^* = \El\ (\app\ a^*)\]
%\[(\top_\U)^* = \lambda\ \top_\U\]
%\[(\Sigma_\U\ a\ b)^* = \lambda\ (\Sigma_\U\ (\app\ a^*)[\wk,\var.1]\ (\app\ b^*)[\wk^3,\var.1[\wk],(\var[\wk^2],\var),\var.2[\wk]])\]
%\[(\Pi_\U\ a\ b)^* = \lambda\ (\Pi_\U\ a[\wk^2]\ (\Pi_\U\ (\app\ a^*)[\wk^2,\var]\ (\app\ b^*)[\wk^4,\var[\wk],(\var[\wk^3],\var),(\app\ \var)[\wk]])\]

%\subsection{Internal parametricity with $\Sigma$ and $\top$}

%We add a new unary constructor $\refl$.

%\[\refl_{\_} : (\Gamma:\Ctx)\r \Tm\ \Gamma\ \Gamma^*\]
%\[TODO\]

%We add to parametricity, with $\_^*$

\section{Adjoints to forgetful functors}

\label{generalCategories}

In this section we assume an extension of finitary GATs denoted $T\subset T'$ where $T'$ adds operations and equations, but no new sort to $T$. We want to show that the forgetful functor (\ref{forgetfulFunctor}) has a right adjoint if and only if it commutes with finite colimits. We have in mind the special case where $T'$ is an interpretation of $T$.

We will use the fact that finitary GATs correspond to finitary essentially algebraic theories as indicated in Section 6 of \cite{cartmell1986generalised}. This means (among other things) that models for a finitary GAT form an lfp (locally finitely presented) category. The rich theory of such categories is presented in \cite{adamek1994locally}. In this section we will use both GATs and essentially algebraic theories at will. %Definition EAT 3.34 in adamek

All the results presented in this section are well-known to experts in lfp categories. %, but we give them anyway because our intended audience might not be familiar with them.

\subsection{Existence of a left adjoint}

%This subsection is unecessary to the rest of the paper.
%Recall that any forgetful functor has a left adjoint.

\begin{lemma}
The forgetful functor:
\begin{eqnarray}
U : \Alg_{T'}\r \Alg_T
\end{eqnarray} 
 has a left adjoint.
%\[F : \Alg_T \dashv \Alg_{T'} : U\]
%where $U$ is the forgetful functor.
\end{lemma}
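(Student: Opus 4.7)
My plan is to invoke the adjoint functor theorem for locally finitely presentable (lfp) categories: a functor between lfp categories has a left adjoint if and only if it preserves small limits and filtered colimits (for instance Theorem 1.66 of \cite{adamek1994locally}). So the strategy reduces to checking three facts: both categories are lfp, $U$ preserves limits, and $U$ preserves filtered colimits.

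First, both $\Alg_T$ and $\Alg_{T'}$ are lfp. This uses the correspondence between finitary GATs and finitary essentially algebraic theories recalled at the start of the section (Section 6 of \cite{cartmell1986generalised}), together with the characterization of lfp categories as categories of models of finitary essentially algebraic theories (Corollary 3.36 of \cite{adamek1994locally}). Hence both sides of $U$ carry the structure needed to apply the theorem.

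Next, I need limit and filtered-colimit preservation. The key observation is that, since $T'$ adds no new sorts to $T$, the underlying family of sets of any $T'$-algebra is already a $T$-algebra in a canonical way, and $U$ forgets only operations and equations. Limits and filtered colimits in $\Alg_T$ and $\Alg_{T'}$ are both computed sortwise, i.e.\ in $\Set$ on each sort, with the operations acting pointwise: for limits this is standard, for filtered colimits this is where finitarity of the added operations and equations in $T'$ is used (finitary operations commute with filtered colimits of sets, and finitary equations are preserved under them). Consequently $U$ preserves both.

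The main technical subtlety is really just making precise this sortwise computation of filtered colimits in $\Alg_{T'}$; this is the place where the finitary hypothesis on $T'$ is essential, and without it no such argument would go through. Once this is granted, the three facts combine, via the adjoint functor theorem, to produce the desired left adjoint to $U$.
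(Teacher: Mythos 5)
Your argument is correct, but it takes a genuinely different route from the paper. The paper disposes of this lemma by citation: the left adjoint is the free construction of a $T'$-algebra on a $T$-algebra, mentioned at the end of Section 15 of \cite{cartmell1986generalised} as a direct extension of the classical free-algebra construction for algebraic theories. You instead invoke the adjoint functor theorem for locally presentable categories (Theorem 1.66 of \cite{adamek1994locally}), reducing the claim to lfp-ness of both categories plus preservation of limits and filtered colimits by $U$. Both ingredients you need are genuinely available: lfp-ness comes from the GAT/essentially-algebraic correspondence already recalled in this section, limit preservation is immediate since limits on both sides are computed sortwise over the same set of sorts, and finitarity of $U$ is exactly what the paper itself establishes in the very next lemma (via Lemma 2.3.6 of \cite{dzierzon2005essentially} and the two-out-of-three argument), so your proof reuses infrastructure the paper develops anyway for the right adjoint. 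What the paper's citation buys is a concrete, syntactic description of the left adjoint (generators and relations), which is what makes Example \ref{degenerateLeftAdjoint} about freely adding parametricity intelligible; what your proof buys is self-containedness within the categorical toolkit of Section \ref{generalCategories} and independence from the details of Cartmell's construction. One small imprecision: your ``if and only if'' is too strong in the ``only if'' direction --- a right adjoint between lfp categories must preserve limits and be accessible, but need not preserve $\omega$-filtered colimits --- though this does not affect your proof, since you only use the ``if'' direction.
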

\begin{proof}
%The existence of the forgetful functor $U$ is obvious. The fact that it has a left adjoint
This is mentioned at the end of Section 15 in \cite{cartmell1986generalised}, as a straightforward extension of this fact for algebraic theories \cite{lawvere1963functorial}.
\end{proof}

\begin{example}
\label{degenerateLeftAdjoint}
For parametricity, the left adjoint is freely adding parametricity to a model of type theory. This construction is often degenerate. Indeed freely adding parametricity to a model contradicting parametricity gives an inconsistent model, that is a mathematical world where everything is true. 
\end{example}

%\begin{remark}
%This construction is often degenerate. Indeed freely adding parametricity to a model contradicting parametricity gives an inconsistent model, i.e. a mathematical world where everything is true. 
%\end{remark}

\subsection{The forgetful functor is finitary}

%We assume given an extension of finitary essentially algebraic theory denoted $T\subset T'$, where $T'$ only adds operations and equations. We denote by $\Alg_T$ the category of algebras for $T$. The example we have in mind is $T$ the theory of models of type theory, and $T'$ the extension with parametricity.

%We assume given an extension of finitary essentially algebraic theory denoted $T\subset T'$, where $T'$ only adds operations and equations. %We denote by $\Alg_T$ the category of algebras for $T$. 
%The example we have in mind is $T$ the theory of models of type theory, and $T'$ the extension with parametricity. We state some generalities.

%\begin{lemma}
%We have an adjunction:
%\[F : \Alg_T \dashv \Alg_{T'} : U\]
%where $U$ is the forgetful functor.
%\end{lemma}
%\begin{proof}
%The existence of the forgetful functor $U$ is obvious. The fact that it has a left adjoint is mentioned at the end of Section 15 in \cite{cartmell1986generalised}, as a straightforward extension of this fact for extensions of algebraic theories \cite{lawvere1963functorial}.
%\end{proof}

Recall that a functor is called finitary if it commutes with filtered colimits, and conservative if it reflects isomorphisms.

\begin{lemma}
\label{finitaryColim}
The forgetful functor:
\begin{eqnarray}
U : \Alg_{T'}\r \Alg_T
\end{eqnarray}
is finitary and conservative.
\end{lemma}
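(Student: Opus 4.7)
The plan is to exploit the crucial hypothesis that $T'$ adds no new sort to $T$: every $T'$-algebra has the same underlying family of sets as its image under $U$, and a morphism in $\Alg_{T'}$ is the same datum as a morphism in $\Alg_T$ together with the verification that it preserves the extra operations of $T'$. From this, both statements will follow from standard facts about categories of models of finitary GATs.

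For the finitary part, I would first recall that since $T$ and $T'$ are finitary GATs, $\Alg_T$ and $\Alg_{T'}$ are lfp categories, and that filtered colimits in such categories are computed sortwise (that is, in $\Set$); see \cite{adamek1994locally}. Now, given a filtered diagram $D : \I \r \Alg_{T'}$, the colimit of $U\circ D$ in $\Alg_T$ is obtained by taking the sortwise colimit in $\Set$ and equipping it with the operations of $T$ via the filtered-colimit universal property (an operation is finitary, so it factors through some stage), with equations inherited from each stage. Because $T'$ adds no new sort, the same underlying family of sets supports operations and equations for the finitely many extra symbols of $T'$, again by finitarity, and the resulting $T'$-algebra satisfies the universal property of $\colim D$ in $\Alg_{T'}$. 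Applying $U$ to $\colim D$ yields the sortwise colimit, which is $\colim(U\circ D)$; so $U$ preserves filtered colimits.

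For conservativity, let $f : A \r B$ be a morphism in $\Alg_{T'}$ such that $U(f)$ is an isomorphism in $\Alg_T$. Then $U(f)$ is bijective on each sort (since isomorphisms in lfp categories of algebras are detected sortwise), and its set-theoretic inverse $g$ is automatically a morphism in $\Alg_T$. I would check that $g$ also respects each additional operation $o$ of $T'$: from $f(o_A(\vec{a})) = o_B(f(\vec{a}))$ and the bijectivity of $f$ on sorts, applying $g$ to both sides and substituting $\vec{b} = f(\vec{a})$ gives $g(o_B(\vec{b})) = o_A(g(\vec{b}))$. Hence $g$ is a $T'$-morphism and is a two-sided inverse to $f$ in $\Alg_{T'}$.

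I do not expect a real obstacle here; the content is entirely that $T'$ shares its sorts with $T$, plus the standard sortwise description of filtered colimits and isomorphisms in categories of models of finitary GATs. The only thing to be careful about is to invoke the right reference for the sortwise computation of filtered colimits in $\Alg_{T'}$, for which the translation from GATs to essentially algebraic theories in Section 6 of \cite{cartmell1986generalised} combined with \cite{adamek1994locally} suffices.
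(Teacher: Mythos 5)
Your proposal is correct and rests on the same key observation as the paper's proof: since $T'$ adds no new sorts, everything reduces to the fact that filtered colimits and isomorphisms in categories of models of finitary GATs are computed and detected on the underlying $S$-sorted sets. The paper merely packages this more abstractly, citing Lemma 2.3.6 of \cite{dzierzon2005essentially} for the finitarity and conservativity of the forgetful functors $\Alg_T\r\Set^S$ and $\Alg_{T'}\r\Set^S$ and concluding via the cancellation Lemma \ref{2OutOf2Finitary}, whereas you unfold the same sortwise argument by hand (your phrase ``finitely many extra symbols'' should just read ``extra operations, each of finite arity''; nothing requires $O$ or $E$ to be finite).
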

\begin{proof}
Lemma 2.3.6 from \cite{dzierzon2005essentially} states that for a finitary essentially algebraic theory $T$ the forgetful functor (here $S$ is the set of sorts in $T$): 
\begin{eqnarray}
U_T :\mathrm{Alg}_T\r \Set^S
\end{eqnarray} 
 %associating its underlying sets to an algebra 
is finitary and conservative. Then we have a commuting triangle of functors:
\begin{eqnarray}
U_T U &=& U_{T'}
\end{eqnarray}
and $U$ is finitary and conservative by Lemma \ref{2OutOf2Finitary}.
\end{proof}

\begin{lemma}
\label{2OutOf2Finitary}
Assume two functors $F$ and $G$. If $G$ and $GF$ are finitary and conservative, then so is $F$.
\end{lemma}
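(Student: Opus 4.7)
The plan is to check the two properties (conservativity and finitariness) separately, each by a short diagram chase that exploits $G$'s conservativity together with the preservation properties of $G$ and $GF$.

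For conservativity, I would start from a morphism $f$ in the domain of $F$ such that $F(f)$ is an isomorphism. Applying $G$, we get that $G(F(f)) = (GF)(f)$ is an isomorphism (every functor sends isos to isos). Since $GF$ is assumed conservative, this forces $f$ itself to be an isomorphism, so $F$ is conservative.

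For finitariness, let $D : \I \to \mathcal{A}$ be a filtered diagram (where $\mathcal{A}$ is the domain of $F$) with colimit $\colim D$. Consider the canonical comparison map
\begin{eqnarray}
\varphi : \colim (F \circ D) \r F(\colim D).
\end{eqnarray}
I want to show $\varphi$ is an isomorphism. Applying $G$, and using that $G$ is finitary (so $G$ commutes with $\colim (F\circ D)$) and that $GF$ is finitary (so $GF$ commutes with $\colim D$), the map $G(\varphi)$ is identified with the canonical comparison map
\begin{eqnarray}
\colim (GF \circ D) \r (GF)(\colim D),
\end{eqnarray}
which is an isomorphism by the finitariness of $GF$. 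Hence $G(\varphi)$ is an iso, and since $G$ is conservative by hypothesis, $\varphi$ is an iso as desired.

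There is no serious obstacle here; the only point that requires a bit of care is the standard identification of $G(\varphi)$ with the comparison map for $GF\circ D$, which is just the universal property of the colimit applied twice. Everything else is a direct application of the two hypotheses in the evident order.
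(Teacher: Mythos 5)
Your proof is correct and follows essentially the same route as the paper: conservativity of $F$ via conservativity of $GF$, and finitariness via the identity $G(\psi_F)\circ\psi_G=\psi_{GF}$ between comparison maps, concluding with the conservativity of $G$. No issues.
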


\begin{proof}
It is clear that $F$ is conservative, because if $F(g)$ is an isomorphism, so is $GF(g)$, and then so is $g$ because $G F$ is conservative.

For $U : \C\r \D$ a functor and a diagram $i\mapsto c_i$ in $\C$ we denote by $\psi_U$ the canonical map:
\begin{eqnarray}
\psi_U &:& \colim_i\, U(c_i) \r U (\colim_i\, c_i)
\end{eqnarray}
By definition, $U$ commutes with the colimit of $i\mapsto c_i$ if and only if $\psi_U$ is an isomorphism. 

For any diagram $i\mapsto c_i$, we have a commutative triangle: 

\begin{eqnarray}
G(\psi_F)\circ \psi_G = \psi_{GF}
\end{eqnarray}

%\centerline{
%\xymatrix{
%& \colim_i\, GF(c_i) \ar[dl]_{\psi_G} \ar[rd]^{\psi_{GF}} & \\
%G(\colim_i\, F(c_i))  \ar[rr]_{G(\psi_F)} & & GF(\colim_i\, c_i) \\
%}}

If the diagram is filtered $\psi_G$ and $\psi_{GF}$ are isomorphisms, therefore so is $G(\psi_F)$. But $G$ is conservative so $\psi_F$ is an isomorphism and $F$ is finitary.
\end{proof}

\subsection{Sufficient condition for a right adjoint}

%Recall that category of models for a finitary essentially algebraic theory are precisely locally presentable category (Theorem 3.36 in \cite{adamek1994locally}).

%\begin{lemma}
%\label{rightAdjoint}
%A functor between locally presentable category has a right adjoint if and only if it commutes with small colimits.
%\end{lemma}

The next lemma is well-known for functors between lfp categories. %essentially algebraic theories.

\begin{lemma}
\label{rightAdjoint}
The forgetful functor:
\begin{eqnarray}
U : \Alg_{T'}\r \Alg_T
\end{eqnarray} 
has a right adjoint if and only if it commutes with small colimits.%A functor between locally presentable category has a right adjoint if and only if it commutes with small colimits.
\end{lemma}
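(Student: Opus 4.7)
The plan is to prove the two directions separately, leaning on the fact (recalled in the section intro) that both $\Alg_T$ and $\Alg_{T'}$ are locally finitely presentable categories, since they are the model categories of finitary GATs, equivalently of finitary essentially algebraic theories.

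The ``only if'' direction is immediate and routine: if $U$ has a right adjoint $R$, then $U$ is itself a left adjoint, and left adjoints preserve all colimits that exist in the source. Since $\Alg_{T'}$ is cocomplete (being lfp), $U$ preserves every small colimit.

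For the ``if'' direction I would invoke the standard theorem from locally presentable category theory (Theorem 1.66 of Adámek--Rosický \cite{adamek1994locally}): a functor between locally presentable categories is a left adjoint if and only if it preserves small colimits. Concretely, the classical proof proceeds via the Special Adjoint Functor Theorem: lfp categories are complete, cocomplete, well-powered and co-well-powered, and admit a small strong generator (namely a set of representatives of finitely presentable objects); hence the solution set condition for a cocontinuous functor is automatic, and a right adjoint $R$ is obtained pointwise by taking, for each $B \in \Alg_T$, the limit of the comma category $(U \downarrow B)$ restricted to a small subcategory indexed by the strong generator. Since the hypothesis of the lemma is precisely that $U$ commutes with small colimits, the theorem applies and yields the desired right adjoint.

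The main obstacle is not logical but bibliographic: one must justify that the ambient categories are genuinely lfp, which the section introduction already does by citing Section~6 of \cite{cartmell1986generalised}, and one must invoke the right result from \cite{adamek1994locally}. Beyond that there is nothing to grind through; the lemma reduces to a direct quotation of well-known material, which matches the section's declared aim of recording ``only well-known material'' for readers unfamiliar with lfp categories.
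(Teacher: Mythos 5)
Your proof is correct, but it takes a genuinely different route from the paper on the substantive (``if'') direction. You dispose of ``only if'' by noting that a left adjoint preserves colimits, which is also implicit in the paper. For the converse you quote the adjoint functor theorem for locally presentable categories (Theorem 1.66 of \cite{adamek1994locally}, or equivalently the dual Special Adjoint Functor Theorem, using that an lfp category is cocomplete, co-wellpowered and has a strong generator given by the finitely presentable objects). The paper instead unwinds that theorem into an explicit construction: it uses the Gabriel--Ulmer representation $\C\simeq\Lex(\C_\fin^{op},\Set)$ and defines the right adjoint directly by $R(d)=\Hom_\D(U\_,d)$ restricted to finitely presented objects, checking that this presheaf is left exact (here is where commutation with \emph{finite} colimits is used) and then verifying the adjunction isomorphism by writing an arbitrary object as a canonical filtered colimit of finitely presented ones. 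Your approach is shorter and reduces everything to a citation; the paper's buys a concrete description of $R(d)$ as ``probes of $d$ by images of finitely presented objects,'' which is the pattern actually exploited in the examples of Section 4 (computing the right adjoint via homs out of free objects). One small imprecision in your sketch: the SAFT-style construction produces $R(B)$ as a terminal object of the comma category $(U\downarrow B)$, obtained as a limit over the solution set, rather than ``the limit of $(U\downarrow B)$ restricted to the generator''; but this does not affect the validity of the argument, since you are in any case entitled to quote the theorem wholesale.
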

\begin{proof}
Recall that the category of models for a finitary essentially algebraic theory is an lfp category (by Theorem 3.36 in \cite{adamek1994locally}).
%We will see this property is true for any 
Moreover (by Theorem 1.46 in \cite{adamek1994locally}) any lfp category $\C$ is equivalent to: 
\begin{eqnarray}
\Lex(\C_\fin^{op},\Set)
\end{eqnarray} 
via the Yoneda embedding where: 
\begin{itemize}
\item $\C_\fin$ is the category of finitely presented objects in $\C$.
\item $\Lex(\C,\D)$ is the category of functors from $\C$ to $\D$ commuting with finite limits.
\end{itemize} %Here $\Lex$ stands for left exact functor, meaning functors commuting with finite limits.

We suppose given a functor between lfp categories:
\begin{eqnarray}
U &:& \C\r\D
\end{eqnarray} 
commuting with small colimits. We define:

\begin{eqnarray}
R &:& \D\r \Lex(\C_\fin^{op},\Set) \\
R(d) &=& \Hom_\D(U\_,d)
\end{eqnarray}

This is well-defined because $U$ commutes with finite colimits. Now recall that in an lfp category any object $c$ is a canonical filtered colimit of finitely presented objects (Proposition 1.22 in \cite{adamek1994locally}), so we have: 
\begin{eqnarray}
c &=& \colim_i\, c_i
\end{eqnarray} 
with $c_i$ finitely presented. But by the definition of $R$ we have: 
\begin{eqnarray}
\Hom_\C(c_i,R(d)) &=& \Hom_\D(U(c_i),d)
\end{eqnarray}
therefore:
\begin{eqnarray}
\Hom_\C(c,R(d)) &=& \Hom_\C(\colim_i\, c_i,R(d)) \\ 
 &=&\lim_i\, \Hom_\C(c_i,R(d))\\
&=& \lim_i\, \Hom_\D(U(c_i),d)\\
&=& \Hom_\D(\colim_i\, U(c_i),d)\\
&=& \Hom_\D(U(c),d)
\end{eqnarray}
where we used the fact that $U$ commutes with filtered colimits. So $R$ is indeed a right adjoint to $U$.
\end{proof}

\begin{theorem}
\label{rightAdjoint2}
%We have an adjunction:
%\[U:\Alg_{T'} \dashv \Alg_T : C\]
%if and only if $U$ commutes with finite colimits.
The forgetful functor: 
\begin{eqnarray}
U : \Alg_{T'}\r \Alg_T
\end{eqnarray} 
has a right adjoint if and only if it commutes with finite colimits.
\end{theorem}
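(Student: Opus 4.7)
The plan is to bootstrap from Lemma \ref{rightAdjoint}, using Lemma \ref{finitaryColim} to upgrade ``commutes with finite colimits'' to ``commutes with all small colimits.'' The forward direction is trivial: any functor admitting a right adjoint preserves all colimits, including finite ones, so there is nothing to prove there.

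For the non-trivial direction, I would assume $U$ commutes with finite colimits. By Lemma \ref{finitaryColim} we already know that $U$ is finitary, i.e., commutes with filtered colimits. The key categorical fact I would invoke is that every small colimit can be computed as a filtered colimit of finite colimits: given a diagram $D : J \r \Alg_{T'}$ over a small category $J$, one views $J$ as the filtered colimit (in $\mathrm{Cat}$) of its finite full subcategories $J_i$, and then
\begin{eqnarray}
\colim_J D &=& \colim_i\, \colim_{J_i}\, (D|_{J_i})
\end{eqnarray}
with the outer colimit filtered and the inner ones finite. Since $U$ preserves both finite colimits (by assumption) and filtered colimits (by Lemma \ref{finitaryColim}), it preserves the right-hand side, and hence commutes with $\colim_J D$. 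Thus $U$ commutes with all small colimits.

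Applying Lemma \ref{rightAdjoint} then yields a right adjoint to $U$. The main obstacle is merely locating a clean reference for the decomposition of small colimits as filtered colimits of finite ones; this is folklore in locally presentable category theory and is spelled out in \cite{adamek1994locally}. Once that is in hand, the proof is essentially a one-line combination of the two preceding lemmas.
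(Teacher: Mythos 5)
Your overall strategy is exactly the paper's: the forward direction is immediate since a functor with a right adjoint is a left adjoint and hence preserves all colimits, and the backward direction combines Lemma \ref{finitaryColim} with Lemma \ref{rightAdjoint} via the fact that a functor preserving finite and filtered colimits preserves all small colimits. That fact is true (and the paper itself only sketches it), so the only question is whether your justification of it holds up.

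As stated, it does not. You propose to write a small category $J$ as the filtered colimit of its finite \emph{full} subcategories and then interchange colimits. The problem is twofold. First, a full subcategory on finitely many objects need not be a finite category: if $J$ is the one-object category whose morphisms form the monoid $\mathbb{N}$, the full subcategory on that object is $J$ itself, and a colimit over it is not a finite colimit. Second, if you instead insist on genuinely finite subcategories (finitely many morphisms, closed under composition), these need not exhaust $J$: in the same example the only finite submonoid of $\mathbb{N}$ is $\{0\}$, so $J$ is not the filtered union of its finite subcategories. The correct decomposition --- the one the paper invokes --- is to write $\colim_J D$ as the coequalizer of a pair of maps between the coproducts $\coprod_{f : j \r j'} D(j)$ and $\coprod_{j} D(j)$, to write each coproduct as a filtered colimit of its finite sub-coproducts, and to use the fact that colimits commute with colimits to express $\colim_J D$ as a filtered colimit (indexed by pairs of finite sets of objects and of morphisms between them) of coequalizers of finite coproducts, i.e.\ of finite colimits. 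With that substitution in place of your subcategory decomposition, your proof is complete and coincides with the paper's.
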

\begin{proof}
It is well-known that a functor commuting with finite and filtered colimits commutes with small colimits. This is because any small colimit is a coequalizer of coproducts, and a coproduct is a filtered colimit of finite coproducts. 

But $U$ commutes with filtered colimits by Lemma \ref{finitaryColim}, so by Lemma \ref{rightAdjoint} it has a right adjoint if and only if it commutes with finite colimits.
\end{proof}

\section{Constructing right adjoints from interpretations}

\label{constructingRightAdjoint}

In this section we assume given $T'$ an interpretation of $T$. Mimicking parametricity, we denote by $\_^*$ any unary operation added in $T'$. We want to prove that the forgetful functor:
\begin{eqnarray}
U : \Alg_{T'} \r \Alg_T
\end{eqnarray} 
commutes with finite colimits, so that it has a right adjoint. This theorem is proved using the definition of colimits in $\Alg_T$ and $\Alg_{T'}$ as QIITs.

\begin{notation}
In the rest of this section we write $\langle s_j\rangle$ for the sequence $(s_1,\dots,s_n)$, where $n$ can be inferred.
\end{notation}

\begin{notation}
As already indicated in Remark \ref{remarkCStar}, for $c$ a constructor in $T$, and $\_^*$ a unary operation added in $T'$, we denote by: 
\begin{eqnarray}
c\langle x_j\rangle^* &=& c^*\langle x_j,x_j^*\rangle
\end{eqnarray}
with $c^*$ some term in $T$ the equation defining $\_^*$ recursively on $c$. 
\end{notation}

%Note this formula is valid when there is only one unary operation $\_^*$ per sort (as for parametricity), but the analogous general formula is clear.

\subsection{Commutation with the initial object} 

The next lemma implies the well-known fact that the initial model of type theory is parametric.

\begin{lemma}
\label{commuteInitial}
The forgetful functor:
\begin{eqnarray}
U : \Alg_{T'}\r \Alg_T
\end{eqnarray} 
commutes with initial objects.
\end{lemma}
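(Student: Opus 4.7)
My plan is to show that the initial $T$-algebra $\I_T$ can itself be equipped with a $T'$-algebra structure, and that this structure makes it initial in $\Alg_{T'}$, from which $U(\I_{T'}) \cong \I_T$ follows.

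First, I would exploit the defining feature of an interpretation: the unary operations $\_^*$ added in $T'$ are recursively defined in $\I_T$ by the equations $E$. Viewing $\I_T$ as a QIIT, its induction principle lets us simultaneously define the operations $\_^*$ on $\I_T$ using the recursive clauses $c\langle x_j\rangle^* = c^*\langle x_j, x_j^*\rangle$ of Remark \ref{remarkCStar}, and check that the equations of $T$ are respected (i.e., $s = t$ in $T$ implies $s^* = t^*$, which is exactly the well-definedness condition encoded in $E$). The additional unary equations $E'$ are then established by a further induction on $\I_T$. The outcome is a $T'$-algebra $A$ whose underlying $T$-algebra is $U(A) = \I_T$.

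Second, I would show $A$ is initial in $\Alg_{T'}$. Given any $T'$-algebra $B$, initiality of $\I_T$ supplies a unique $T$-morphism $f : \I_T \to U(B)$. I claim that $f$ automatically preserves $\_^*$, i.e., $f(x^*) = f(x)^*$. This is proven by induction on $x$ in the QIIT $\I_T$: for a constructor $c$, using the recursive equation in $A$, the fact that $f$ preserves the $T$-term $c^*$, the inductive hypothesis, and finally the recursive equation in $B$ (which holds since $B$ is a $T'$-algebra), one chains
\begin{eqnarray}
f(c\langle x_j\rangle^*) &=& f(c^*\langle x_j, x_j^*\rangle) \\
 &=& c^*\langle f(x_j), f(x_j^*)\rangle \\
 &=& c^*\langle f(x_j), f(x_j)^*\rangle \\
 &=& c\langle f(x_j)\rangle^* \\
 &=& f(c\langle x_j\rangle)^*.
\end{eqnarray}
Hence $f$ lifts to a $T'$-morphism $A \to B$. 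Uniqueness is immediate, since any $T'$-morphism $A \to B$ is in particular a $T$-morphism $\I_T \to U(B)$ and so must coincide with $f$ by initiality of $\I_T$.

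Therefore $A$ is initial in $\Alg_{T'}$, so $A \cong \I_{T'}$ and $U(\I_{T'}) \cong U(A) = \I_T$, which is what we wanted. The only potentially subtle point is the bookkeeping when there are multiple sorts each carrying their own unary operation (as in parametricity, where $\_^*$ acts on contexts, types, substitutions, and terms): one must perform the recursion and induction simultaneously across all sorts, but this is exactly what the QIIT induction principle delivers, so no real obstacle arises.
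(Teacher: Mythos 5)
Your proposal is correct and follows essentially the same route as the paper: equip $\I_T$ with the $\_^*$ operations provided by the definition of an interpretation, then show the unique $T$-morphism out of $\I_T$ automatically preserves $\_^*$ by the very same five-step induction chain. The paper's proof is just a terser version of yours (it takes the existence of the $T'$-structure on $\I_T$ directly from the definition of an interpretation rather than re-deriving it via the QIIT induction principle).
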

\begin{proof}
We consider the initial object $\I_T$ in $\Alg_T$. By definition of an interpretation, we can define operations $\_^*$ on $\I_T$ which obey the unary equations added in $T'$, so that $(\I_T,\_^*) : \Alg_{T'}$. 

Now it is enough to prove that $(\I_T,\_^*)$ is initial in $\Alg_{T'}$, so that: 
\begin{eqnarray}
U(\I_{T'}) = U(\I_T,\_^*) = \I_T
\end{eqnarray}
To this end, we show that the unique morphism: 
\begin{eqnarray}
\psi : \Hom_{\Alg_T}(\I_T,U(\C))
\end{eqnarray} 
for any $\C:\Alg_{T'}$ commutes with $\_^*$. So we prove by induction on $\I_T$ that $\psi(x^*) = \psi(x)^*$ for any $x$.

Indeed we have:
\begin{eqnarray}
\psi(c\langle x_j\rangle^*) &\eqd& \psi(c^*\langle x_j,x_j^*\rangle)\\ 
&\eqm& c^*\langle\psi(x_j),\psi(x_j^*)\rangle \\
&\eqi& c^*\langle\psi(x_j),\psi(x_j)^*\rangle \\
&\eqd& (c\langle\psi(x_j)\rangle)^* \\ 
&\eqm& \psi(c\langle x_j\rangle)^*
\end{eqnarray}
where $\eqi$ indicates induction, $\eqd$ definition of $\_^*$ and $\eqm$ the fact that $\psi$ is a morphism in $\Alg_T$.

From this we can conclude that:
\begin{eqnarray}
\psi &:& \Hom_{\Alg_{T'}}((\I_T,\_^*) , \C)
\end{eqnarray}
and it is the unique such morphism by initiality of $\I_T$.
\end{proof}

\subsection{Commutation with pushouts}

The next lemma uses in a crucial way the hypothesis that operations added in an interpretation are unary.

\begin{lemma}
\label{commutePushout}
The forgetful functor: 
\begin{eqnarray}
U : \Alg_{T'}\r \Alg_T
\end{eqnarray} 
commutes with pushouts.
\end{lemma}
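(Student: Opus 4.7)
The plan is to mirror the strategy of Lemma~\ref{commuteInitial}: given a pushout diagram in $\Alg_{T'}$, compute the pushout of its image under $U$ in $\Alg_T$, then equip this pushout with unary operations $\_^*$ satisfying the equations of $T'$ so that it lifts to a pushout in $\Alg_{T'}$.

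Concretely, consider a pushout span $\C_1 \xleftarrow{f} \C_0 \xrightarrow{g} \C_2$ in $\Alg_{T'}$, and let $\D$ be the pushout of its image under $U$ in $\Alg_T$, with coprojections $i_k : U\C_k \r \D$. I would view $\D$ as a QIIT whose generators include, for each $x$ in $\C_k$, an element $i_k(x)$, together with the constructors of $T$, and whose equations include: that each $i_k$ preserves the constructors and equations of $T$, and the coequalizing identification $i_1 \circ f = i_2 \circ g$ on $\C_0$.

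Next, I would define $\_^*$ on $\D$ by recursion, using in an essential way that each added operation is unary. The clauses are
\begin{eqnarray*}
i_k(x)^* &:=& i_k(x^*), \\
(c\langle y_j\rangle)^* &:=& c^*\langle y_j, y_j^*\rangle,
\end{eqnarray*}
where on the right $x^*$ is computed using the $T'$-structure of $\C_k$ and $c^*$ is the term in $T$ witnessing the recursive definition of $\_^*$. Well-definedness requires checking that the equations of the QIIT presentation of $\D$ are respected: the equations expressing that $i_k$ is a $T$-morphism reduce to the recursive equations of $T'$ combined with the fact that $\C_k$ already satisfies them; the coequalizing identification is preserved because $f$ and $g$ commute with $\_^*$ as morphisms in $\Alg_{T'}$; and the equations of $T$ are respected precisely by the definition of a recursive interpretation. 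The additional unary equations $E'$ then hold on $\D$ by induction: on the $i_k$-generators they reduce to the corresponding equations in $\C_k$ (using that $i_k$ commutes with $\_^*$ by construction), and on $T$-constructors the inductive step is the one already valid in $\I_T$.

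Finally, I would verify that $(\D, \_^*)$ equipped with the coprojections $i_k$ is a pushout in $\Alg_{T'}$. By construction the $i_k$ are morphisms in $\Alg_{T'}$; universality follows from universality in $\Alg_T$ combined with an induction, exactly analogous to that of Lemma~\ref{commuteInitial}, showing that the $T$-morphism induced into any competing cocone commutes with $\_^*$ and hence lifts to $\Alg_{T'}$. The main obstacle is precisely the well-definedness of $\_^*$ on $\D$: each QIIT equation must be respected by the recursive clauses. This is where the unary hypothesis is indispensable---if the new operations had multiple principal inputs, inputs drawn from $\C_1$ and from $\C_2$ would have no common preimage on which to evaluate, and there would be no coherent way to assemble the operation on $\D$ from the data on the cocone.
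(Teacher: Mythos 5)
Your proposal is correct and follows essentially the same route as the paper: present the pushout $U(\C_1)\coprod_{U(\C_0)}U(\C_2)$ as a QIIT generated by the $T$-constructors and the coprojections, define $\_^*$ recursively with the key unary clause $i_k(x)^* = i_k(x^*)$, verify the QIIT equations (including the coequalizing relation via the fact that $f,g$ are $\Alg_{T'}$-morphisms) and the unary equations $E'$ by induction, and establish the universal property by the same induction as in the initial-object case. All the steps you identify, including the role of unarity, match the paper's argument.
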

\begin{proof}
Given a span in $\Alg_{T'}$:
\begin{eqnarray}
\C_1\overset{f_1}{\leftarrow} \D \overset{f_2}{\r} \C_2
\end{eqnarray} 
our goal is to define operations $\_^*$ on the pushout:
\begin{eqnarray}
\C = U(\C_1)\coprod_{U(\D)} U(\C_2)
\end{eqnarray}
and prove that $\C$ equipped with these operations is a pushout in $\Alg_{T'}$.

The object $\C$ is generated by:
\begin{itemize} 
\item The constructors in $T$ (as for $\I_T$ in the previous lemma).
\item For $\epsilon=1,2$ morphisms:
\begin{eqnarray}
p_\epsilon &:& \Hom_{\Alg_T}(U(\C_\epsilon),\C)
\end{eqnarray}
meaning that we have constructors $p_\epsilon$ for contexts, types, terms and substitutions, as well as equations for $p_\epsilon$ commuting with any constructor in $T$.
\item For any $x$ in $\D$ we have:
\begin{eqnarray}
p_1(f_1(x)) &=& p_2(f_2(x))
\end{eqnarray}
\end{itemize}

%\begin{itemize}
%\item 
First we define $\_^*$ recursively on $\C$. For the constructors of $T$ we proceed as for $\I_T$, for the new constructors we define:
\begin{eqnarray}
p_\epsilon(x)^* &=& p_\epsilon(x^*)
\end{eqnarray}
This definition makes sense only for unary operations. 

We need to check this preserves the equations. For equations in $T$ this is part of the hypothesis that $\_^*$ is inductively defined, and we see for equations on $p_\epsilon$ that:
\begin{eqnarray}
p_\epsilon(c\langle x_j\rangle)^* &\eqd& p_\epsilon(c\langle x_j\rangle^*) \\
&\eqd& p_\epsilon(c^*\langle x_j,x_j^*\rangle) \\
&\eqm& c^*\langle p_\epsilon(x_j),p_\epsilon(x_j^*)\rangle\\
&\eqd& c^*\langle p_\epsilon(x_j),p_\epsilon(x_j)^*\rangle \\
&\eqd& c\langle p_\epsilon(x_j)\rangle^*
\end{eqnarray}
where $\eqd$ indicates the definition of $\_^*$ and $\eqm$ the fact that $p_\epsilon$ are morphisms in $\Alg_T$. Moreover for $x$ in $\D$:

\begin{eqnarray}
p_1(f_1(x))^* &\eqd& p_1(f_1(x)^*) \\
&\eqf& p_1(f_1(x^*)) \\
&=& p_2(f_2(x^*)) \\
&\eqf& p_2(f_2(x)^*) \\
&\eqd& p_2(f_2(x))^*
\end{eqnarray}
where $\eqd$ indicates the definition of $\_^*$ and $\eqf$ comes from the fact that $f_\epsilon$ is a morphism in $\Alg_{T'}$. So we have defined the operations $\_^*$.

%\item 
Next we check that any $x$ in $\C$ obeys the unary equations added in $T'$. We proceed inductively on $x$. When $x$ is constructed from $T$ we use the hypothesis that equations are inductively proven in the initial model. When $x$ is of the form $p_\epsilon(x')$ we use the fact that equations added in $T'$ are true in $\C_\epsilon$ so they are true for $x'$, together with the fact that $p_\epsilon$ is a morphism so it preserves equations.

%\item 
Now we have $(\C,\_^*):\Alg_{T'}$, we want to check that it is a pushout. To do this it is enough to check that any morphism:
\begin{eqnarray}
\psi &:& \Hom_{\Alg_T}(U(\C_1)\coprod_{U(\D)}U(\C_2) , U(\E))
\end{eqnarray}
defined from a commutative square in $\Alg_{T'}$:
\[\centerline{\xymatrix{
\D \ar[d]_{f_2}\ar[r]^{f_1}& \C_1 \ar[d]^{g_1}\\
\C_2 \ar[r]_{g_2} & \E
}} \]
does commute with the operations $\_^*$ previously defined. By definition, $\psi$ is such that: 
\begin{eqnarray}
\psi(p_\epsilon(x)) = g_\epsilon(x)
\end{eqnarray} 

We proceed inductively, using computations from the previous lemma, together with the following new case:
\begin{eqnarray}
\psi(p_\epsilon(x)^*) &=& \psi(p_\epsilon(x^*))\\
 &=& g_\epsilon(x^*) \\
 &=& g_\epsilon(x)^* \\
 &=& \psi(p_\epsilon(x))^*
\end{eqnarray}

This concludes the proof.
\end{proof}

\subsection{Main theorem and applications}

We are ready to give the most important result in this paper. Recall that we have assumed a forgetful functor: 
\begin{eqnarray}
U:\Alg_{T'}\r\Alg_T
\end{eqnarray}
for $T'$ an interpretation of $T$.

\begin{theorem}
\label{mainTheorem}
The forgetful functor: 
\begin{eqnarray}
U : \Alg_{T'}\r \Alg_T
\end{eqnarray} 
has a right adjoint.
\end{theorem}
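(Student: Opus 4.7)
The plan is to combine the machinery already assembled in the preceding sections. By Theorem \ref{rightAdjoint2}, it suffices to show that $U : \Alg_{T'} \r \Alg_T$ commutes with finite colimits. So the strategy reduces to a decomposition argument: finite colimits in any category can be built from the initial object together with pushouts, so a functor that preserves both automatically preserves all finite colimits.

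More concretely, I would first recall (or briefly justify) the standard fact that a category has finite colimits as soon as it has an initial object and pushouts, and that a functor preserving these two classes of colimits preserves all finite colimits. The idea is that binary coproducts are pushouts over the initial object, finite coproducts follow by iteration, and coequalizers are a special case of pushouts; hence every finite colimit diagram can be expressed as a finite iteration of these two constructions.

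Then I would simply invoke Lemma \ref{commuteInitial} (which tells us $U$ preserves the initial object) together with Lemma \ref{commutePushout} (which tells us $U$ preserves pushouts). Combining these with the decomposition above yields that $U$ commutes with finite colimits. An application of Theorem \ref{rightAdjoint2} then immediately produces the desired right adjoint.

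There is no real obstacle here: the substantive work has already been done in Lemmas \ref{commuteInitial} and \ref{commutePushout}, and the colimit-decomposition step is a standard categorical fact. The proof is essentially a one-line assembly of previous results, and the main care needed is simply to cite them in the right order.
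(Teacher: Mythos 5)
Your proposal matches the paper's proof exactly: the paper likewise reduces to commutation with finite colimits via Theorem \ref{rightAdjoint2} and then cites Lemmas \ref{commuteInitial} and \ref{commutePushout}, leaving implicit the standard decomposition of finite colimits into initial object and pushouts that you spell out. The argument is correct as stated.
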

\begin{proof}
By Theorem \ref{rightAdjoint2} it is enough to show that $U$ commutes with finite colimits. This is precisely the content of Lemmas \ref{commuteInitial} and \ref{commutePushout}.
\end{proof}

\begin{example}
We consider groups interpreting monoids. In this case the right adjoint is:
\begin{eqnarray}
C &:& \mathrm{Mon} \r \mathrm{Grp} \\
C(M) &=& M^\times
\end{eqnarray}
where $M^\times$ is the group of invertible elements in $M$. Indeed $\mathbb{Z}$ is the free group generated by $\{1\}$, so the underlying set of $C(M)$ is:
\begin{eqnarray} 
C(M) &=& \Hom_\Set(\{1\},C(M)) \\
&=& \Hom_\mathrm{Grp}(\mathbb{Z},C(M)) \\
&=& \Hom_\mathrm{Mon}(\mathbb{Z},M)\\
%&=& \Hom_\mathrm{Mon}(\mathbb{Z},M^\times) \label{equationMonoids}\\
&=& M^\times \label{equationFreeGroup}
\end{eqnarray} 
%where equation \ref{equationMonoids} uses the fact that morphisms of monoids preserve invertible elements, and equation \ref{equationFreeGroup} uses the adjunction property for a free group together with the fact that $\Hom_\Set(\{1\},M^\times) = M^\times$. 
The group structure is computed in the same way.
\end{example}

\begin{example}
We consider the interpretation of graphs by reflexive graphs. Recall that reflexive graphs are given by the theory:
\begin{eqnarray}
V &:& \Set \\ 
E &:& V\r V\r\Set \\
 r &:& (v:V)\r E\ v\ v
\end{eqnarray}
The right adjoint is:
\begin{eqnarray}
C &:& \mathrm{Gph}\r \mathrm{rGph} \\
C(V,E) &=& (v:V)\times E\ v\ v , \nonumber \\ &&  (v,e)(v',e') \mapsto E\ v\ v' , \nonumber\\ && (v,e)\mapsto e
\end{eqnarray}
To see this we consider $\I_c$ the free reflexive graph generated by $\{\mathrm{c}\}$ a vertex. Then the set of vertices of $C(V,E)$ is:
\begin{eqnarray}
C(V,E) &=& \Hom_\Set(\{\mathrm{c}\},C(V,E)) \\
&=& \Hom_{\mathrm{rGph}}(\I_c,C(V,E)) \\
&=& \Hom_\mathrm{Gph}(U(\I_c),(V,E)) \\
&=& (v:V)\times E\ v\ v
\end{eqnarray}
The rest of the structure is computed in the same way.
\end{example}

\begin{example}
We consider the interpretation:
\begin{eqnarray}
X &:& \Set \\ 
s &:& X\r X
\end{eqnarray}
of the theory with $X:\Set$ alone. We denote its category of models by $\Set_s$.
\begin{eqnarray} 
\Set_s &=&\{X:\Set\ |\ s:X\r X\}
\end{eqnarray}
Then the right adjoint is:
\begin{eqnarray}
C &:& \Set \r \Set_s \\
C(X) &=& \mathbb{N} \r X, \nonumber\\ && f\mapsto (n\mapsto f(n+1))
\end{eqnarray}
To see this, note that $\mathbb{N}$ with the successor function is the free object in $\Set_s$ generated by $\{0\}$. Then the underlying set of $C(X)$ is:
\begin{eqnarray}
C(X) &=& \Hom_\Set(\{0\},C(X)) \\
&=& \Hom_{\Set_s}(\mathbb{N},C(X)) \\
&=& \Hom_\Set(\mathbb{N},X)
\end{eqnarray}
The function $f \mapsto ( n \mapsto f(n+1))$ is computed in the same way.
\end{example}

These three examples should be contrasted with each other:
\begin{itemize}
\item In the first example, being invertible is a property of an element in a monoid (because there is at most one inverse). Then the right adjoint just needs to send a monoid to its group of invertible elements. This can be generalized to any unary property inductively provable, with the right adjoint sending an object to its subobject of elements obeying this property.
\item In the second example, having an edge from $v$ to $v$ is really a structure on a vertex $v$ (because there can be many such edges). So in this case we need to consider vertices together with a chosen edge in order to construct the right adjoint.
\item The third example is the most interesting. Here having an image by $s$ is clearly a structure, but to build the right adjoint it is not enough to require that any element comes with its image by $s$. Indeed this image should itself have an image, and so on. An iteration is taking place. This can be generalized to the interpretation of any theory $T$ by the theory of $T$-algebras with an endomorphism.
\end{itemize}

Now we study our main example.

\begin{example}
\label{remarkUnary}
Consider the forgetful functor:
\begin{eqnarray}
U : \Alg_{\pTT}\r \Alg_\TT
\end{eqnarray}
from parametric models to models of type theory. By Theorems \ref{parametricityAsInterpretation} and \ref{mainTheorem} it has a right adjoint $C$. Now we study $C(\D)$ for $\D$ a model of type theory. We denote by $\I_X$ the free parametric model generated by an element $X:\Ctx$. Then:
\begin{eqnarray}
\Ctx_{C(\D)} &=& \Hom_{\Set}(\{X\},\Ctx_{C(\D)})\\
&=& \Hom_{\Alg_{\pTT}}(\I_X,C(\D)) \\
&=&\Hom_{\Alg_\TT}(U(\I_X),\D)
\end{eqnarray}
but reasoning as in Lemmas \ref{commuteInitial} and \ref{commutePushout}, we can prove that $U(\I_X)$ is isomorphic to the free (non-parametric) model of type theory with:
\begin{eqnarray}
X &:& \Ctx\\
X^* &:& \Ty\ X\\
X^{**} &:& \Ty\ (x:X,X^*(x),X^*(x))\\
&\vdots& \nonumber 
\end{eqnarray}
with the usual notation for contexts in a CwF. So giving a context in $C(\D)$
is equivalent to giving:
\begin{eqnarray}
\Gamma &:& \Ctx_\D\\
\Gamma^* &:& \Ty_\D\ \Gamma\\
\Gamma^{**} &:& \Ty_\D\ (x:\Gamma,\Gamma^*(x),\Gamma^*(x))\\
&\vdots& \nonumber
\end{eqnarray}
 We can get similar formulas for types, terms, and so on.
\end{example}

\begin{remark}
The right adjoint $C$ does not suffer from the same defect as the left adjoint in Example \ref{degenerateLeftAdjoint}. We assume an empty type $\bot$ and we say that a model is inconsistent if its $\bot$ is inhabited. If $C(\D)$ is inconsistent then so is $U(C(\D))$, and the counit: 
\begin{eqnarray}
\epsilon &:& \Hom_{\Alg_\TT}(U(C(\D)),\D)
\end{eqnarray} 
implies that $\D$ is inconsistent.
\end{remark}

\begin{remark}
\label{remarkBinary}
Binary parametricity can be treated by our method. By following the last example, we see that a context in $C(\D)$ would consist of:%The binary case and cubes
\begin{eqnarray}
\Gamma &:& \Ctx_\D\\
\Gamma^* &:& \Ty_\D\ (\Gamma,\Gamma)\\
\Gamma^{**} &:& \Ty_\D\ (x_{00},x_{01}:\Gamma,\Gamma^*(x_{00},x_{01}), \nonumber\\
 & & \hspace{27pt} x_{10},x_{11}:\Gamma,\Gamma^*(x_{10},x_{11}), \nonumber\\
  & & \hspace{27pt} \Gamma^*(x_{00},x_{10}),\Gamma^*(x_{01},x_{11})) \\
  &\vdots& \nonumber
\end{eqnarray}
So we are constructing the semi-cubical model in $\D$.
\end{remark}

\begin{remark}
It should be noted that our method does not immediately give an explicit definition for the three dots in Remarks \ref{remarkUnary} and \ref{remarkBinary}. This lack of concreteness is compensated by some extra generality. %But this is also the use of abstract categorical concept which allows a general approach. 
\end{remark}

\section{Conclusion}

In this paper we defined a procedure from interpretations of type theory to structures on types, outputting semi-cubical structure when given external parametricity. In order to do this we defined interpretations of any theory, and built a right adjoint from any such interpretation.

Our next work will be to apply this method to other interpretations of type theory. We would like to study forcing interpretations giving us some variant of presheaf models, and also the \emph{univalent parametricity} from \cite{tabareau2018equivalences}, hopefully allowing us to build definitionally univalent models (meaning models where $A=_\U B$ is definitionally equal to $A\simeq B$) from univalent models.

We would also like to make sense of the table already given in introduction:
\begin{center}
\begin{tabular}{|c|c|c|}
\hline
Interpretation & Structure \\
\hline
External parametricity & Semi-cubical types\\
Internal parametricity & Cubical types\\
External univalence & Kan semi-cubical types\\
Internal univalence & Kan cubical types\\
\hline
\end{tabular}
\end{center}
This means that we need to find suitable interpretations. From a practical point of view this would give an efficient way to build univalent models of type theory, and might help to design variants of cubical type theory. From a conceptual point of view this could explain how the notion of Kan cubical structure can be deduced from the notion of equivalence.

We also believe this work shed some light on forgetful functors having both a left and a right adjoint. There is a large literature on forgetful functors having a right adjoint and coalgebras (see for example \cite{adamek2003varieties}), but we do not know any reference on such functors having a left adjoint as well. We believe these could be called unary functors by analogy with finitary functors. More precisely, we guess there is some kind of converse to Theorem \ref{mainTheorem}, stating that the forgetful functor of an extension of theories has a right adjoint if and only if the extension obeys a condition weaker than being an interpretation, and stronger than having unary operations.

\section*{Acknowledgements}

We would like to thank the anonymous reviewers for their many interesting remarks and suggestions. We are also grateful to Ambrus Kaposi for suggesting Remark \ref{definitionInterpretation}. Finally we are thankful to Hugo Herbelin for countless mathematical discussions on parametricity, cubes and related subjects.

\bibliographystyle{plain}
\bibliography{Semi_cubical_arxiv}

\break

\section{Appendix: Checking parametricity is well-defined}

%\begin{figure*}[!t]
%\normalsize
%\appendix[Checking parametricity is well-defined]
%\end{figure*}

%We removed intermediary $\equiv$ equations.

Our goal here is to check that given any equation $s=t$ in $\TT$, the inductive definitions of $s^*$ and $t^*$ are equal in $\TT$. We use the symbol $\_\equiv\_$ for equality by definition of $\_^*$, and $\_=\_$ for equality in $\TT$.

First we check equations for the calculus of substitutions.

\begin{eqnarray}
((\sigma\circ \nu)\circ \delta)^* %\equiv (\sigma\circ\nu)^*[\delta\circ\wk,\delta^*] 
\equiv \sigma^*[\nu\circ\wk,\nu^*][\delta\circ\wk,\delta^*] &=& \sigma^*[\nu\circ\delta\circ\wk,\nu^*[\delta\circ\wk,\delta^*]] %\equiv \sigma^*[\nu\circ\delta\circ\wk,(\nu\circ\delta)^*] 
\equiv (\sigma\circ(\nu\circ\delta))^*  \nonumber \\
(\id\circ\sigma)^* %\equiv \id^*[\wk,\sigma^*] 
\equiv \var[\wk,\sigma^*] &=& \sigma^* \nonumber \\
(\sigma\circ\id)^* %\equiv \sigma^*[\wk,\id^*] 
\equiv \sigma^*[\wk,\var] &=& \sigma^* \nonumber \\
\epsilon^* \equiv \t &=& \sigma^* \ \mathrm{for}\ \sigma : \Sub\ \Gamma\ \emptyctx \nonumber\\
(\pi_1\ (\sigma,t))^* %\equiv (\sigma,t)^*.1 
\equiv (\sigma^*,t^*).1 &=& \sigma^* \nonumber\\
(\pi_2\ (\sigma,t))^* %\equiv (\sigma,t)^*.2 
\equiv (\sigma^*,t^*).2 &=& t^* \nonumber\\
(\pi_1\ \sigma , \pi_2\ \sigma)^* %\equiv ((\pi_1\ \sigma)^*,(\pi_2\ \sigma)^*) 
\equiv (\sigma^*.1,\sigma^*.2) &=& \sigma^* \nonumber\\
((\sigma,t)\circ \nu)^* %\equiv (\sigma,t)^*[\nu\circ\wk,\nu^*] 
\equiv (\sigma^*,t^*)[\nu\circ\wk,\nu^*] &=& (\sigma^*[\nu\circ\wk,\nu^*],t^*[\nu\circ\wk,\nu^*]) %\equiv ((\sigma\circ\nu)^*,(t[\nu])^*) 
\equiv (\sigma\circ\nu,t[\nu])^* \nonumber
\end{eqnarray}

Next we check equations for the unit.

\begin{eqnarray}
\t^*\equiv \t &=& x^*\ \mathrm{for}\ x:\Tm\ \Gamma\ \top \nonumber\\
(\top[\sigma])^* %\equiv \top^*[\sigma\circ\wk^2,\sigma^*[\wk],\var] 
\equiv \top[\sigma\circ\wk^2,\sigma^*[\wk],\var] &=& \top \equiv \top^*\nonumber \\
(\t[\sigma])^* %\equiv \t^*[\sigma\circ\wk,\sigma^*] 
\equiv \t[\sigma\circ\wk,\sigma^*] &=& \t \equiv \t^*\nonumber
\end{eqnarray}

Now we check equations for products. The constructor $\Sigma$ is our first cumbersome case:

\begin{eqnarray}
((\Sigma\ A\ B)[\sigma])^* %&\equiv& (\Sigma\ A\ B)^*[\sigma\circ\wk^2,\sigma^*[\wk],\var] \nonumber\\
&\equiv& (\Sigma\ (A^*[\wk,\var.1])\ (B^*[\wk^3,\var.1[\wk],(\var[\wk^2],\var),\var.2[\wk]]))[\sigma\circ\wk^2,\sigma^*[\wk],\var] \nonumber\\
&=& \Sigma\ (A^*[\wk,\var.1][\sigma\circ\wk^2,\sigma^*[\wk],\var])\ (B^* [\wk^3,\var.1[\wk],(\var[\wk^2],\var),\var.2[\wk]][\sigma\circ\wk^3,\sigma^*[\wk^2],\var[\wk],\var]) \nonumber\\
&=& \Sigma\ (A^*[\sigma\circ\wk^2,\sigma^*[\wk],\var.1])\ (B^*[\sigma\circ\wk^3,\var.1[\wk],(\sigma^*[\wk^2],\var),\var.2[\wk]]) \nonumber\\
&=& \Sigma\ (A^*[\sigma\circ\wk^2,\sigma^*[\wk],\var][\wk,\var.1])\ \nonumber\\
&& \ \ \ (B^*[\sigma\circ\wk^3,\var[\wk^2],(\sigma^*[\wk^2,\var.1],\var.2)[\wk],\var][\wk^3,\var.1[\wk],(\var[\wk^2],\var),\var.2[\wk]]) \nonumber\\
&\equiv& %\Sigma\ (A[\sigma])^*[\wk,\var.1]\ (B[\sigma\circ\wk,\var])^*[\wk^3,\var.1[\wk],(\var[\wk^2],\var),\var.2[\wk]]\equiv
(\Sigma\ A[\sigma]\ B[\sigma\circ\wk,\var])^* \nonumber
\end{eqnarray}
%where we used at $(1)$ the fact that:

%\begin{eqnarray}
%(\sigma\circ\wk,\var)^*[\wk^3,\var.1[\wk],(\var[\wk^2],\var)] %\equiv (\sigma^*[\wk^2,\var.1],\var.2)[\wk^3,\var.1[\wk],(\var[\wk^2],\var)] = (\sigma^*[\wk^2],\var) \nonumber
%\end{eqnarray}

Now for terms we have:

\begin{eqnarray}
((s,t).1)^* %\equiv (a,b)^*.1 
\equiv (s^*,t^*).1 &=& s^* \nonumber\\
((s,t).2)^* %\equiv (a,b)^*.2 
\equiv (s^*,t^*).2 &=& t^* \nonumber\\
(t.1,t.2)^* %\equiv ((t.1)^*,(t.2)^*) 
\equiv (t^*.1,t^*.2) &=& t^* \nonumber\\
((s,t)[\sigma])^* %\equiv (a,b)^*[\sigma\circ\wk,\sigma^*] 
\equiv (s^*,t^*)[\sigma\circ\wk,\sigma^*] &=& (s^*[\sigma\circ\wk,\sigma^*],t^*[\sigma\circ\wk,\sigma^*]) %\equiv ((a[\sigma])^*,(b[\sigma])^*) 
\equiv (s[\sigma],t[\sigma])^* \nonumber
\end{eqnarray}

%\hrulefill

Now we proceed with equations for functions. The case of $\Pi$ is complicated to write down.

\begin{eqnarray}
((\Pi\ A\ B)[\sigma])^* %\nonumber\\
%&\equiv& (\Pi\ A\ B)^*[\sigma\circ\wk^2,\sigma^*[\wk],\var] \nonumber\\
 &\equiv& (\Pi\ A[\wk^2]\ (\Pi\ A^*[\wk^2,\var]\ B^*[\wk^4,\var[\wk],(\var[\wk^3],\var),(\app\ \var)[\wk]]))[\sigma\circ\wk^2,\sigma^*[\wk],\var] \nonumber\\
&=& \Pi\ A[\wk^2][\sigma\circ\wk^2,\sigma^*[\wk],\var]\ (\Pi\ A^*[\wk^2,\var][\sigma\circ\wk^3,\sigma^*[\wk^2],\var[\wk],\var]\ \nonumber\\
& & \ \ \  (B^*[\wk^4,\var[\wk],(\var[\wk^3],\var),(\app\ \var)[\wk]][\sigma\circ\wk^4,\sigma^*[\wk^3],\var[\wk^2],\var[\wk],\var])) \nonumber\\
&=& \Pi\ A[\sigma\circ\wk^2]\ (\Pi\ A^*[\sigma\circ\wk^3,\sigma^*[\wk^2],\var]\ (B^*[\sigma\circ\wk^4,\var[\wk],(\sigma^*[\wk^3],\var),(\app\ \var)[\wk]]))\nonumber\\
&=& \Pi\ A[\sigma\circ\wk^2]\ (\Pi\ A^*[\sigma\circ\wk^2,\sigma^*[\wk],\var][\wk^2,\var]\ \nonumber\\
& & \ \ \  (B^*[\sigma\circ\wk^3,\var[\wk^2],(\sigma^*[\wk^2,\var.1],\var.2)[\wk],\var][\wk^4,\var[\wk],(\var[\wk^3],\var),(\app\ \var)[\wk]])) \nonumber\\
 %&\equiv& \Pi\ A[\sigma \circ \wk^2]\ (\Pi\ (A[\sigma])^*[\wk^2,\var]\ (B[\sigma\circ\wk,\var])^*[\wk^4,\var[\wk],(\var[\wk^3],\var),(\app\ \var)[\wk]]) \nonumber\\
 &\equiv& (\Pi\ A[\sigma]\ B[\sigma\circ\wk,\var])^*\nonumber
\end{eqnarray}

%Where $(1)$ used:
%\begin{eqnarray}
%(\sigma\circ\wk,\var)^*[\wk^4,\var[\wk],(\var[\wk^3],\var)] %\equiv (\sigma^*[\wk^2,\var.1],\var.2)[\wk^4,\var[\wk],(\var[\wk^3],\var)]  = (\sigma^*[\wk^4,\var[\wk^3]] , \var) = (\sigma^*[\wk^3],\var) \nonumber
%\end{eqnarray}

Then for substitution in $\lambda$ we have:
\begin{eqnarray}
((\lambda\ t)[\sigma])^* %&\equiv& (\lambda\ t)^*[\sigma\circ\wk,\sigma^*] \nonumber\\
&\equiv& \lambda\ (\lambda\ (t^*[\wk^3,\var[\wk],(\var[\wk^2],\var)]))[\sigma\circ\wk,\sigma^*] \nonumber\\
&=& \lambda\ (\lambda\ (t^*[\wk^3,\var[\wk],(\var[\wk^2],\var)][\sigma\circ\wk^3,\sigma^*[\wk^2],\var[\wk],\var])) \nonumber\\
&=& \lambda\ (\lambda\ (t^*[\sigma\circ\wk^3,\var[\wk],(\sigma^*[\wk^2],\var)]))\nonumber\\
&=& \lambda\ (\lambda\ (t^*[\sigma\circ\wk^3,\var[\wk],\sigma^*[\wk^3,\var[\wk^2],\var])\nonumber\\
&=& \lambda\ (\lambda\ (t^*[\sigma\circ\wk^2,\var[\wk],\sigma^*[\wk^2,\var.1,\var.2]][\wk^3,\var[\wk],(\var[\wk^2],\var)]))\nonumber\\
%&\equiv& \lambda\ (\lambda\ ((t[\sigma\circ\wk,\var])^*[\wk^3,\var[\wk],(\var[\wk^2],\var)]))\nonumber\\
&\equiv& (\lambda\ (t[\sigma\circ\wk,\var]))^*\nonumber
\end{eqnarray}

And for $\app$ and $\lambda$ composed one way:
\begin{eqnarray}
(\app\ (\lambda\ t))^* %&\equiv& (\app\ (\app\ (\lambda\ t)^*))[\wk^2,\var.1,\var[\wk],\var.2] \nonumber\\
&\equiv& \app\ (\app\ (\lambda\ (\lambda\ t^*[\wk^3,\var[\wk],(\var[\wk^2],\var)])))[\wk^2,\var.1,\var[\wk],\var.2] \nonumber\\
&=& t^*[\wk^3,\var[\wk],(\var[\wk^2],\var)][\wk^2,\var.1,\var[\wk],\var.2] \nonumber\\
&=& t^*[\wk^2,\var[\wk],(\var.1,\var.2)] \nonumber\\
&=& t^*[\wk^2,\var[\wk],\var] \nonumber\\
&=& t^* \nonumber
\end{eqnarray}

And the other:

\begin{eqnarray}
(\lambda\ (\app\ t))^* %&\equiv& \lambda\ (\lambda\ ((\app\ t)^*[\wk^3,\var[\wk],(\var[\wk^2],\var)])) \nonumber\\
 &\equiv& \lambda\ (\lambda\ (\app\ (\app\ t^*)[\wk^2,\var.1,\var[\wk],\var.2][\wk^3,\var[\wk],(\var[\wk^2],\var)])) \nonumber\\
 &=& \lambda\ (\lambda\ (\app\ (\app\ t^*)[\wk^3,\var[\wk^2],\var[\wk],\var]))\nonumber\\ 
 &=& \lambda\ (\lambda\ (\app\ (\app\ t^*))) \nonumber\\
 &=& t^*\nonumber
\end{eqnarray}

%\hrulefill

And last we check equations for the universe. First for substitutions in $\U$, $\El$ and $\top_\U$ we have:

\begin{eqnarray}
(\U[\sigma])^* %\equiv \U^*[\sigma\circ\wk^2,\sigma^*[\wk],\var] 
\equiv (\Pi\ (\El\ \var)\ \U)[\sigma\circ\wk^2,\sigma^*[\wk],\var] &=& \Pi\ (\El\ \var)\ \U \equiv \U^*  \nonumber\\
((\El\ t)[\sigma])^* %\equiv (\El\ a)^*[\sigma\circ\wk^2,\sigma^*[\wk],\var]
\equiv (\El\ (\app\ t^*))[\sigma\circ\wk^2,\sigma^*[\wk],\var] &=& \El\ (\app\ (t^*[\sigma\circ\wk,\sigma^*])) %\equiv \El\ (\app\ (a[\sigma])^*)
\equiv (\El\ (t[\sigma]))^*  \nonumber\\
(\top_\U[\sigma])^* %\equiv \top_\U^*[\sigma\circ\wk,\sigma^*] 
\equiv (\lambda\ \top_\U)[\sigma\circ\wk,\sigma^*] &=& \lambda\ \top_\U \equiv \top_\U^*  \nonumber
\end{eqnarray}

Now for substitution in $\Sigma_\U$ we have:

\begin{eqnarray}
((\Sigma_\U\ s\ t)[\sigma])^* %&\equiv& (\Sigma_\U\ a\ b)^*[\sigma\circ\wk,\sigma^*]  \nonumber\\
&\equiv& (\lambda\ (\Sigma_\U\ (\app\ s^*)[\wk,\var.1]\ (\app\ t^*)[\wk^3,\var.1[\wk],(\var[\wk^2],\var),\var.2[\wk]]))[\sigma\circ\wk,\sigma^*]  \nonumber\\
&=&\lambda\ (\Sigma_\U\ (\app\ s^*)[\wk,\var.1][\sigma\circ\wk^2,\sigma^*[\wk],\var]\ \nonumber\\
&& \ \ \ (\app\ t^*)[\wk^3,\var.1[\wk],(\var[\wk^2],\var),\var.2[\wk]][\sigma\circ\wk^3,\sigma^*[\wk^2],\var[\wk],\var]) \nonumber\\
&=& \lambda\ (\Sigma_\U\ (\app\ s^*)[\sigma\circ\wk^2,\sigma^*[\wk],\var.1]\ (\app\ t^*)[\sigma\circ\wk^3,\var.1[\wk],(\sigma^*[\wk^2],\var),\var.2[\wk])  \nonumber\\
 &=& \lambda\ (\Sigma_\U\ (\app\ s^*)[\sigma\circ\wk^2,\sigma^*[\wk],\var][\wk,\var.1]\ \nonumber\\
 & & \ \ \ (\app\ t^*)[\sigma\circ\wk^3,\var[\wk^2],(\sigma^*[\wk^2,\var.1],\var.2)[\wk],\var][\wk^3,\var.1[\wk],(\var[\wk^2],\var),\var.2[\wk]])  \nonumber\\
%&\equiv& \lambda\ (\Sigma_\U\ (\app\ (a[\sigma])^*)[\wk,\var.1])\ (\app\ (b[\sigma\circ\wk,\var])^*)[\wk^3,\var.1[\wk],(\var[\wk^2],\var),\var.2[\wk]] \nonumber\\
&\equiv& (\Sigma_\U\ s[\sigma]\ t[\sigma\circ\wk,\var])^*  \nonumber
\end{eqnarray}

%Where $(1)$ use the fact that:

%\begin{eqnarray}
%(\sigma\circ\wk,\var)^*[\wk^3,\var.1[\wk],(\var[\wk^2],\var)] = (\sigma^*[\wk^2],\var)\nonumber
%\end{eqnarray}

And for substitution in $\Pi_\U$ we have:

\begin{eqnarray}
 ((\Pi_\U\ s\ t)[\sigma])^* %&\equiv& (\Pi_\U\ a\ b)^*[\sigma\circ\wk,\sigma^*] \nonumber\\
&\equiv& \lambda\ (\Pi_\U\ s[\wk^2]\ (\Pi_\U\ (\app\ s^*)[\wk^2,\var]\ (\app\ t^*)[\wk^4,\var[\wk],(\var[\wk^3],\var),(\app\ \var)[\wk]]))[\sigma\circ\wk,\sigma^*] \nonumber\\
&=& \lambda\ (\Pi_\U\ s[\wk^2][\sigma\circ\wk^2,\sigma^*[\wk],\var]\ (\Pi_\U\ (\app\ s^*)[\wk^2,\var][\sigma\circ\wk^3,\sigma^*[\wk^2],\var[\wk],\var]\  \nonumber\\
& & \ \ \ (\app\ t^*)[\wk^4,\var[\wk],(\var[\wk^3],\var),(\app\ \var)[\wk]][\sigma\circ\wk^4,\sigma^*[\wk^3],\var[\wk^2],\var[\wk],\var])) \nonumber\\
&=& \lambda\ (\Pi_\U\ s[\sigma\circ\wk^2]\ (\Pi_\U\ (\app\ s^*)[\sigma\circ\wk^3,\sigma^*[\wk^2],\var]\ \nonumber\\
&& \ \ \ (\app\ t^*)[\sigma\circ\wk^4,\var[\wk],(\sigma^*[\wk^3],\var),(\app\ \var)[\wk]])) \nonumber\\
&=& \lambda\ (\Pi_\U\ s[\sigma\circ\wk^2]\ (\Pi_\U\ (\app\ s^*)[\sigma\circ\wk^2,\sigma^*[\wk],\var][\wk^2,\var]\  \nonumber\\
& & \ \ \ (\app\ t^*)[\sigma\circ\wk^3,\var[\wk^2],(\sigma^*[\wk^2,\var.1],\var.2)[\wk],\var][\wk^4,\var[\wk],(\var[\wk^3],\var),(\app\ \var)[\wk]])) \nonumber\\
%&\equiv& \lambda\ (\Pi_\U\ a[\sigma \circ \wk^2]\ (\Pi_\U\ (\app\ (a[\sigma])^*)[\wk^2,\var]\ (\app\ (b[\sigma\circ\wk,\var])^*)[\wk^4,\var[\wk],(\var[\wk^3],\var),(\app\ \var)[\wk]]))  \nonumber\\
 &\equiv& (\Pi_\U\ s[\sigma]\ t[\sigma\circ\wk,\var])^* \nonumber
\end{eqnarray}

%Where $(1)$ used:

%\begin{eqnarray}
%(\sigma\circ\wk,\var)^*[\wk^4,\var[\wk],(\var[\wk^3],\var)] = (\sigma^*[\wk^3],\var) \nonumber
%\end{eqnarray}

And finally we can check the equation for $\top_\U$ as follows:

\begin{eqnarray}
(\El\ \top_\U)^* %\equiv \El\ (\app\ \top_\U^*) 
\equiv \El\ (\app\ (\lambda\ \top_\U)) &=& %\El\ \top_\U = 
\top \equiv \top^*  \nonumber
\end{eqnarray}

And then for $\Sigma_\U$ and $\Pi_\U$ we have:

\begin{eqnarray}
(\El\ (\Sigma_\U\ s\ t))^* %\equiv \El\ (\app\ (\Sigma_\U\ a\ b)^*) 
&\equiv& \El\ (\app\ (\lambda\ (\Sigma_\U\ (\app\ s^*)[\eta_1]\ (\app\ t^*)[\eta_2])))  \nonumber\\
&=& \Sigma\ (\El\ (\app\ s^*))[\eta_1]\ (\El\ (\app\ t^*))[\eta_2] \nonumber\\%\equiv \Sigma\ (\El\ a)^*[\eta_1]\ (\El\ b)^*[\eta_2]
&\equiv& (\Sigma\ (\El\ s)\ (\El\ t))^*  \nonumber\\
%\end{eqnarray}
%And for $\Pi_\U$ we have:
%\begin{eqnarray}
(\El\ (\Pi_\U\ s\ t))^* %\equiv \El\ (\app\ (\Pi_\U\ a\ b)^*)
 &\equiv& \El\ (\app\ (\lambda\ (\Pi_\U\ s[\sigma_1]\ (\Pi_\U\ (\app\ s^*)[\sigma_2]\ (\app\ t^*)[\sigma_3])))  \nonumber\\
&=& \Pi\ (\El\ s)[\sigma_1]\ (\Pi\ (\El\ (\app\ s^*))[\sigma_2]\ (\El\ (\app\ t^*))[\sigma_3])  \nonumber\\%\equiv \Pi\ (\El\ a)[\sigma_1]\ (\Pi\ (\El\ a)^*[\sigma_2]\ (\El\ b)^*[\sigma_3]) 
&\equiv& (\Pi\ (\El\ s) (\El\ t))^*
 \nonumber
\end{eqnarray}

Where we have: 

\begin{eqnarray}
\eta_1 &=& (\wk,\var.1) \nonumber\\
\eta_2 &=& (\wk^3,\var.1[\wk],(\var[\wk^2],\var),\var.2[\wk]) \nonumber\\
\sigma_1 &=& \wk^2 \nonumber\\
\sigma_2 &=& (\wk^2,\var) \nonumber\\
\sigma_3 &=& (\wk^4,\var[\wk],(\var[\wk^3],\var),(\app\ \var)[\wk]) \nonumber
\end{eqnarray}

\end{document}